\newtheorem{thm}{Theorem}[section]
\newtheorem{proposition}{Proposition}[section]
\newtheorem{example}{\bf Example}[section]
\newtheorem{lemma}{\bf Lemma}[section]
\newtheorem{definition}{Definition}[section]
\numberwithin{equation}{section}
\newtheorem{assumption}{Assumption}[section]
\begin{document}

\baselineskip=17pt

\title[]
{ risk-sensitive discounted  cost criterion for Continuous-time Markov decision processes on a general state space}

\author[Subrata Golui]{Subrata Golui}
\address{Department of Mathematics\\
Indian Institute of Technology Guwahati\\
Guwahati, Assam, India}
\email{golui@iitg.ac.in}

\author[Chandan Pal]{Chandan Pal}
\address{Department of Mathematics\\
Indian Institute of Technology Guwahati\\
Guwahati, Assam, India}
\email{cpal@iitg.ac.in}



\date{}

\begin{abstract}
\vspace{2mm}
\noindent
In this paper, we consider  risk-sensitive discounted control problem for continuous-time jump Markov processes taking values in general state space. The transition rates of underlying continuous-time jump Markov processes and the cost rates are allowed to be unbounded. Under certain Lyapunov condition, we establish the existence and uniqueness of the solution to the Hamilton-Jacobi-Bellman (HJB) equation. Also we prove the existence of optimal risk-sensitive control in the class of Markov control.

\vspace{2mm}

\noindent
{\bf Keywords:}
Continuous-time Markov decision process; history-dependent control; general state space; risk-sensitive discounted criterion;  HJB equation; optimal control.

\end{abstract}

\maketitle

\section{INTRODUCTION}
In this paper we study the risk-sensitive discounted criterion for  continuous-time Markov decision processes (CTMDPs) with Borel state space. Risk-sensitive or exponential of integral’ is a very popular cost criterion due to its applications in many areas such as queueing systems and finance, for more details see [\cite{BR}, \cite{WH}] and the references therein. In the literature risk-sensitive control problems for CTMDPs are an important class of stochastic optimal control problems and have been widely studied under different sets of conditions. Finite horizon risk-sensitive CTMDPs for countable sate space were studied in [\cite{GS}, \cite{GHH}, \cite{GLZ}, \cite{W}] and for infinite horizon risk-sensitive CTMDPs we refer to [\cite{GS}, \cite{GZ2}, \cite{KP1}, \cite{KP2}, \cite{PP}, \cite{Z1}]. For important contributions to the risk-sensitive control of discrete time MDP on a general state space, see  [\cite{MS1}, \cite{MS2}]. Although risk-sensitive control of CTMDPs on a countable state space have been studied extensively, but the corresponding literature in the context of  risk sensitive control of CTMDPs on general state space is rather limited. Some exceptions are \cite{GZ},\cite{PP}.

  In the paper \cite{PP}, the authors studied risk-sensitive control of pure jump processes on general state space. They considered bounded transition and cost rates and all controls are Markovian. In \cite{PP}, authors proved a HJB characterization of the optimal risk-sensitive control. The boundedness assumption on transition and cost rates plays a key role in the proof of the existence of the optimal risk-sensitive control in \cite{PP}. This boundedness requirement, however, imposes some restrictions in applications, for instance in queueing control and population processes, where the transition and reward/cost rates are usually unbounded. In \cite{GZ}, the author considered the finite-horizon risk-sensitive control problem for CTMDPs on Borel state space with unbounded transition and cost rates and proved the existence of optimal control via HJB equation.
  
   In this paper we study a much more general problem.  To the best of our knowledge, this is the first work which deals with infinite horizon discounted risk-sensitive control for CTMDPs on general state space with unbounded cost and transition rates and the controls can be  history-dependent. The main objective of this work is to prove the existence of solution of the HJB equation and characterization of optimal risk-sensitive control.
We first consider for bounded transition and cost rates, and establish the existence of a solution to the corresponding  HJB equation by Banach's fixed point theorem as in \cite{PP}. Then we will relax the bounded hypothesis and we extend this result to unbounded transition and cost rates. We characterize the value function via HJB equation. Also we prove the existence of an optimal control in the class of Markov control and HJB characterization of the optimal risk-sensitive control.
 
The rest of this article is structured as follows. Section 2 deals with the description of the problem, required notations, some Assumptions, and preliminary results. In Section 3, we gave continuity-compactness Assumption and we prove the stochastic representation of the solution of the HJB equation (\ref{eq 3.1}). In Section 4, we truncate our transition and cost rates and prove the existence of the unique solution to the HJB equation. The required optimal control is proven in Section 5. In Section 6, we illustrate our theory and assumptions by an illustrative example.
\section{The control problems}
The model of CTMDP is a five-tuple which consists of the following elements: 
\begin{equation*}
\mathbb{M}:=\{S, A, (A(x)\subset A, x\in S),c(x, a),q( \cdot|x, a)\}, \label{eq 2.1}
\end{equation*}
\begin{itemize}
\item  a Borel space $S$, called the state space, whose elements are referred to as states of the system and the corresponding Borel $\sigma$-algebra is $\mathcal{B}(S)$. 	
\item   $A$ is the action set, which is assumed to be Borel space with the Borel $\sigma$-algebra $\mathcal{B}(A)$.
\item  for each $x\in S$, $A(x)\in \mathcal{B}(A)$ denotes the set of admissible actions for state $x$. Let $K:=\{(x, a)|x\in S, a\in A(x)\}$, which is a Borel subset of $S\times A$.
\item  the measurable function $c:K \to \mathbb{R}_{+}$ denotes the cost rate function. We require cost function $c(x, a)$ to measure (or evaluate) the utility of taking action $a$ at state $x$.
	\item given any $(x, a)\in K$, the transition rate $q(\cdot | x, a)$ is a Borel measurable signed kernel on $S$ given $K$. That is, $q(\cdot|x,a)$ satifies countable additivity; $q(D| x, a)\geq 0 $ where $(x,a)\in K$ and $x\notin D$. Moreover, we assume that $q(\cdot | x, a)$ satisfies the following conservative and stable conditions: for any $x\in S,$ 
			\begin{align*}
	&q(S|x,a)\equiv 0 ~~~\text{and}\nonumber\\
	&~q^{*}(x):=\sup_{a\in A(x)}q_x(a)<\infty,
	\end{align*} 
	where $q_x(a):=-q(\{x\}| x, a)\geq 0.$ We need transition rates to specify the random dynamic evolution of the system.
\end{itemize}
Next, we give an informal description of the evolution of the CTMDPs as follows. The controller observes continuously the current state of the system. When the system is in state $x\in S$ at time $t\geq0$, he/she chooses action $a_t\in A(x)$ according to some control. As a consequence of this, the following happens: 
\begin{itemize}
\item the controller incurs an immediate  cost at rate $c(x, a_t)$; and
\item after a random sojourn time (i.e., the holding time at state $x$), the system jumps to a set $B$ ($x\notin B$) of states with the transition probability $\dfrac{q(B|x,a_t)}{q_x(a_t)}$ determined by the transition rates $q(dy|x,a_t)$. The distribution function of the sojourn time is $(1-e^{-\int_{t}^{t+x}q_x(a_s)ds})$. (see Proposition B.8 in [\cite{GH2}, p. 205] for details).
\end{itemize} 
When the state of the system transits to the new state $y\neq x$, the above procedure is repeated.
Thus, the controller tries to minimize his/her costs with respect to some performance criterion $\mathscr{J}_\alpha(\cdot,\cdot, \cdot)$, which in our present case is defined by (\ref{eq 2.5}), below. 
To formalize what is described above, below we describe the construction of continuous time Markov decision processes (CTMDPs) under possibly history-dependent controls.
 To construct the underlying CTMDPs (as in [\cite{GP}, \cite{K}, \cite{PZ}], \cite{PZ1}) we introduce some notations: let $S_\Delta:=S \cup \{\Delta\}$ (with some $\Delta \notin S$), $\Omega_0:=(S\times(0,\infty))^\infty$, $\Omega_k:=(S\times (0,\infty))^k\times S\times (\{\infty\}\times\{\Delta\})^\infty$ for $k\geq 1$ and $\Omega:=\cup_{k=0}^\infty\Omega_k$. Let $\mathscr{F}$ be the Borel $\sigma$-algebra on $\Omega$. Then we obtain the measurable space $(\Omega, \mathscr{F})$. 
	For some $k\geq 1$, and sample $ \omega:=(x_0, \theta_1, x_1, \cdots , \theta_k, x_k, \cdots)\in \Omega,$ define
	 \begin{align*}
	X_0(\omega):=x_0,~ T_0(\omega):=0,~ X_k(w):=x_k,~ T_n(\omega):= T_{n-1}(\omega)+\theta_{n},~ T_\infty(\omega):=\lim_{n\rightarrow\infty}T_n(\omega).
	\end{align*}
 Using $\{T_k\}$, we define the state process $\{\xi_t\}_{t\geq 0}$ as
	\begin{equation}
\xi_t(\omega):=\sum_{k\geq 0}I_{\{T_k\leq t<T_{k+1}\}}x_k+ I_{\{t\geq T_\infty\}}\Delta, \text{ for } t\geq 0~(\text{with}~ T_0:=0).\label{eq 2.4}
	\end{equation}
	Here, $I_{E}$ denotes the indicator function of a set $E$, and we use the convention that $0+z=:z$ and $0z=:0$ for all $z\in S_\Delta$. Obviously, $\xi_t(\omega)$ is right-continuous on $[0,\infty)$. We denote $\xi_{t-}(\omega):=\liminf_{s\rightarrow t-}\xi_s(\omega)$. From eq. (\ref{eq 2.4}), we see that $T_k(\omega)$ $(k\geq 1)$ denotes the $k$-th jump moment of $\{\xi_t, t\geq 0\}$, $X_{k-1}(\omega)=x_{k-1}$ is the state of the process on $[T_{k-1}(\omega),T_k(\omega))$, $\theta_k=T_k(\omega)-T_{k-1}(\omega)$ plays the role of sojourn time at state $x_{k-1}$, and the sample path $\{\xi_t(\omega),t\geq 0\}$ has at most denumerable states $x_k(k=0,1,\cdots)$.  The process after $T_\infty$ is regarded to be absorbed in the state $\Delta$. Thus, let $q(\cdot | \Delta, a_\Delta):\equiv 0$, $A_\Delta:=A\cup \{a_\Delta\}$, $ A(\Delta):=\{a_\Delta\}$, $c(\Delta, a):\equiv 0$ for all $a\in A_\Delta$, where $a_\Delta$ is isolated point.\\
	To precisely define the criterion, we need to introduce the concept of a control as in [\cite{GP},\cite{GHS} and \cite{KR}]. Take the right-continuous $\sigma$-algebras $\{\mathscr{F}_t\}_{t\geq 0}$ with $\mathscr{F}_t:=\sigma(\{T_k\leq s,X_k\in B\}:B\in \mathcal{B}(S), 0\leq s\leq t, k\geq0)$. For all $t\geq 0$, $\mathscr{F}_{s-}=:\bigvee_{0\leq t<s}\mathscr{F}_t$, and $\mathscr{P}:=\sigma(\{A\times \{0\},A\in \mathscr{F}_0\} \cup \{ B\times (s,\infty),B\in \mathscr{F}_{s-}\})$ which denotes the $\sigma$-algebra of predictable sets on $\Omega\times [0,\infty)$ related to $\{\mathscr{F}_t\}_{t\geq 0}$.
To complete the specification of a stochastic optimal control problem, we need, of course, to introduce an optimality criterion. This requires to define the
class of controls as below.
\begin{definition}
	A transition probability $\pi(da|\omega,t)$ from $(\Omega\times[0,\infty),\mathscr{P})$ onto $(A_\Delta,\mathcal{B}(A_\Delta))$ such that $\pi(A(\xi_{t-}(\omega)|\omega,t)\equiv 1$ is called a history-dependent control. The set of all randomized history-dependent controls is denoted by $\Pi$.
	A control $\pi\in \Pi$, is called a Markov if $\pi(da | \omega,t)=\pi(da | \xi_{t-}(w),t)$ for every $w\in \Omega$ and $t\geq 0$, where $\xi_{t-}(w):=\lim_{s\uparrow t}\xi_s(w)$. We denote by  $\Pi^{m}$ the family of all Markov controls. A Markov control $\pi_t(da|\cdot)$ is called a deterministic Markov control whenever there exists a measurable mapping $f:[0,\infty)\times S\rightarrow A$ such that $\pi(da|t,x)=I_{\{f(t,x)\}}(da)$, which means that $\pi(da|t,x)$ is a Dirac measure at $f(t,x)$ for every $x\in S$ and $t\geq 0$. Such a Markov control will be denoted by $f$ for simplicity. The set of such controls is denoted by $\Pi^d_m$. 
	\end{definition}
For any compact metric space $Y$, let $P(Y)$ denote the space of probability measures on $Y$ with Prohorov topology.
Under Assumption \ref{assm 2.1} below, for any initial state $x\in S$ and any control $\pi\in \Pi$, Theorem 4.27 in \cite{KR} yields the existence of a unique probability measure denoted by $P^{\pi}_x$ on $(\Omega,\mathscr{F})$.  Let $E^{\pi}_x$ be the expectation operator with respect to  $P^{\pi}_x$.
Fix any discounted factor $\alpha>0$. For any $\pi\in\Pi$ and $x\in S$, the risk-sensitive discounted criterion is defined as
\interdisplaylinepenalty=0
\begin{align}
\mathscr{J}_\alpha(\theta,x,\pi):=\frac{1}{\theta}\log\biggl\{{E}^{\pi}_x\biggl[exp\biggl(\theta{\int_{0}^{\infty}e^{-\alpha t}\biggl(\int_{A}c(\xi_t(\omega),a)\pi(da|\omega,t)\biggr)dt}\biggr)\biggr]\biggr\},\label{eq 2.5}
\end{align}
provided that the integral  is well defined, where $\xi_t$ is the Markov process corresponding to $\pi\in\Pi$
and $\theta\in (0,1]$ denotes a risk-sensitive parameter and the limiting case of $\theta\rightarrow 0$ is the risk-neutral case.
For each $x\in S$, let
$$\mathscr{J}^{*}_\alpha(\theta,x)=\inf_{\pi\in \Pi}\mathscr{J}_\alpha(\theta,x,\pi).$$
A control $\pi^{*}\in\Pi$ is said to be optimal if $\mathscr{J}_\alpha(\theta,x,\pi^{*})=\mathscr{J}^{*}_\alpha(\theta,x)$ for all $x\in S$.
The objective of this paper is to provide conditions for the existence of optimal control and introduce a HJB characterization of such control.\\
Since logarithm is an increasing function, instead of studying $\mathscr{J}_\alpha(\theta,x,\pi)$, we will consider $\tilde{J}_\alpha(\theta,x,\pi)$ on $[0,1]\times S\times \Pi$ defined by
\begin{align}
\tilde{J}_\alpha(\theta,x,\pi):= {E}^{\pi}_x\biggl[exp\biggl(\theta{\int_{0}^{\infty}e^{-\alpha t}\biggl(\int_{A}c(\xi_t(\omega),a)\pi(da|\omega,t)\biggr)dt}\biggr)\biggr].\label{eq 2.6}
\end{align}
Obviously, $\tilde{J}_\alpha(\theta,x,\pi)\geq 1$ for $(\theta,x)\in[0,1]\times S$ and $\pi\in\Pi$, and we have
$\pi^{*}$ is optimal if and only if
$\inf_{\pi\in \Pi}\tilde{J}_\alpha(\theta,x,\pi)=\tilde{J}(\theta,x,\pi^{*})
=:\tilde{J}^{*}_\alpha(\theta,x) ~\forall x\in S.$
Since the rates $q(dy|x,a)$ and costs $c(x,a)$ are allowed to be unbounded, we next give conditions for the non-explosion of $\{\xi_t,t\geq 0\}$ and finiteness of $ \mathscr{J}_\alpha (\theta , x, \pi )$, which had been widely used in CTMDPs; see, for instance,
[ \cite{GH2}, \cite{GHS}, \cite{GL}, \cite{GP} and \cite{PRH}] and reference therein.
\begin{assumption}\label{assm 2.1}
	There exists a real-valued Borel measurable function $V_0 \geq 1$ on $S$ and constants  $\rho_0> 0$, $M_0>0$, $L_0\geq 0$ and $0<\rho_1<\min\{\alpha,\rho^{-1}_0\alpha^2\}$ such that
	\begin{enumerate}
		\item [(i)] $\int_{S}V_0(y)q(dy |x, a)\leq \rho_0 V_0(x)~~~\forall (x, a)\in K$;
		
		\item [(ii)] $\sup_{a\in A(x)}q_x(a)\leq M_0 V_0(x)~~~\forall x\in S$;
		\item [(iii)] 	
		$\sup_{a\in A(x)}c(x,a)\leq \rho_1\log V_0(x)+L_0~~~\forall x\in S.$
	\end{enumerate}
\end{assumption}
\begin{proposition}\label{prop 2.1}
Under Assumption \ref{assm 2.1}, for any control $\pi \in \Pi$ and $(\theta,x)\in [0,1]\times S$, the following results are true:
	\begin{enumerate}
		\item [(a)] ${P}^{\pi}_x(T_\infty=\infty)=1$, $P^\pi_x(\xi_0=x)=1$, and ${P}^{\pi}_x(\xi_t\in S)=1$ for all $t\geq 0$;
				\item [(b)] ${E}^{\pi}_x[V_0(\xi_t)]\leq e^{\rho_0 t} V_0(x)$~\text{for all}~$t\geq 0;$
				\item [(c)]  We have
				\begin{align*}
				\tilde{J}_\alpha(\theta,x,\pi)\leq \frac{\alpha^2}{\alpha^2-\rho_0\rho_1\theta}e^{ {\theta L_0}/{\alpha}}[V_0(x)]^{\frac{\rho_1\theta}{\alpha}}\leq \frac{\alpha^2}{\alpha^2-\rho_0\rho_1}e^{{L_0}/{\alpha}}V_0(x).
				\end{align*}
				Also, we get
				\begin{align}
				\mathscr{J}^{*}_\alpha(\theta,x)\leq \log\biggl({\frac{\alpha^2}{\alpha^2-\rho_0\rho_1}}\biggr)+\frac{L_0}{\alpha}+\frac{\rho_1}{\alpha}\log{V_0(x)} ~~\forall \theta\in (0,1],x\in S.\label{eq 2.7}
				\end{align}
			\end{enumerate}
		\end{proposition}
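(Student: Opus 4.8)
The plan is to establish (a), (b) and (c) in that order, each feeding the next.

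For (a), the non-explosion statement $P^{\pi}_x(T_\infty=\infty)=1$ is the standard regularity criterion for CTMDPs under a Lyapunov drift condition: Assumption \ref{assm 2.1}(i) gives $\int_S V_0(y)q(dy|x,a)\le \rho_0 V_0(x)$, which together with $V_0\ge 1$ and the stability bound \ref{assm 2.1}(ii) is exactly the hypothesis used in \cite{GH2, GP, KR} to force $T_n\uparrow\infty$ $P^{\pi}_x$-a.s.; concretely, one applies Dynkin's formula to $V_0$ along the process killed at $T_n$, gets $E^{\pi}_x[V_0(\xi_{t\wedge T_n})]\le e^{\rho_0 t}V_0(x)$ via Gr\"onwall, and lets $n\to\infty$. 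The assertions $P^{\pi}_x(\xi_0=x)=1$ and $P^{\pi}_x(\xi_t\in S)=1$ for all $t\ge 0$ are then immediate from the construction of $P^{\pi}_x$ recalled in Section 2 together with $T_\infty=\infty$ a.s. For (b), fix $x\in S$ and $\pi\in\Pi$; applying Dynkin's formula on $[0,t\wedge T_n]$ (legitimate because on $[0,T_n)$ the process visits only the finitely many states $X_0,\dots,X_{n-1}$, so the generator term is a.s.\ integrable there; a truncation $V_0\wedge m$ may be inserted to be scrupulous about the unboundedness) and using Assumption \ref{assm 2.1}(i),
\begin{align*}
E^{\pi}_x[V_0(\xi_{t\wedge T_n})]
&=V_0(x)+E^{\pi}_x\Big[\int_0^{t\wedge T_n}\!\!\int_A\int_S V_0(y)\,q(dy|\xi_s,a)\,\pi(da|\omega,s)\,ds\Big]\\
&\le V_0(x)+\rho_0\int_0^t E^{\pi}_x[V_0(\xi_{s\wedge T_n})]\,ds;
\end{align*}
Gr\"onwall's inequality gives $E^{\pi}_x[V_0(\xi_{t\wedge T_n})]\le e^{\rho_0 t}V_0(x)$, and letting $n\to\infty$, using (a) and Fatou's lemma, yields (b).

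For (c), which carries the real content, I would first invoke Assumption \ref{assm 2.1}(iii) to get $\int_A c(\xi_t,a)\pi(da|\omega,t)\le \rho_1\log V_0(\xi_t)+L_0$, hence
\begin{align*}
\tilde{J}_\alpha(\theta,x,\pi)\le e^{\theta L_0/\alpha}\,E^{\pi}_x\Big[\exp\Big(\theta\rho_1\int_0^\infty e^{-\alpha t}\log V_0(\xi_t)\,dt\Big)\Big].
\end{align*}
The decisive observation is that $\mu(dt):=\alpha e^{-\alpha t}\,dt$ is a probability measure on $[0,\infty)$: writing the exponent as $\tfrac{\theta\rho_1}{\alpha}\int_0^\infty \log V_0(\xi_t)\,\mu(dt)$ and applying Jensen's inequality to the convex map $u\mapsto e^{(\theta\rho_1/\alpha)u}$, then Tonelli's theorem, gives
\begin{align*}
E^{\pi}_x\Big[\exp\Big(\theta\rho_1\int_0^\infty e^{-\alpha t}\log V_0(\xi_t)\,dt\Big)\Big]\le \int_0^\infty \alpha e^{-\alpha t}\,E^{\pi}_x\big[[V_0(\xi_t)]^{\theta\rho_1/\alpha}\big]\,dt.
\end{align*}
Since $\theta\le 1$ and $\rho_1<\alpha$ we have $\theta\rho_1/\alpha<1$, so a second application of Jensen (for the concave map $u\mapsto u^{\theta\rho_1/\alpha}$) together with part (b) gives $E^{\pi}_x[[V_0(\xi_t)]^{\theta\rho_1/\alpha}]\le (e^{\rho_0 t}V_0(x))^{\theta\rho_1/\alpha}$; inserting this and computing $\int_0^\infty \alpha e^{-\alpha t}e^{\rho_0\rho_1\theta t/\alpha}\,dt=\alpha^2/(\alpha^2-\rho_0\rho_1\theta)$ — finite precisely because $\rho_1<\rho_0^{-1}\alpha^2$ and $\theta\le 1$ — yields the first inequality of (c). The second inequality then follows since $\theta\mapsto \alpha^2/(\alpha^2-\rho_0\rho_1\theta)$ is nondecreasing on $(0,1]$, $e^{\theta L_0/\alpha}\le e^{L_0/\alpha}$, and $[V_0(x)]^{\theta\rho_1/\alpha}\le V_0(x)$ (as $V_0\ge1$, $\theta\rho_1/\alpha<1$). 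Finally, taking $\tfrac1\theta\log$ of the first inequality gives $\mathscr{J}_\alpha(\theta,x,\pi)\le \tfrac{L_0}{\alpha}+\tfrac1\theta\log\tfrac{\alpha^2}{\alpha^2-\rho_0\rho_1\theta}+\tfrac{\rho_1}{\alpha}\log V_0(x)$, and since $\tfrac1\theta\log\tfrac{\alpha^2}{\alpha^2-\rho_0\rho_1\theta}=\tfrac1\theta\int_0^\theta \tfrac{\rho_0\rho_1}{\alpha^2-\rho_0\rho_1 s}\,ds$ is a Lebesgue average over $[0,\theta]$ of an increasing function of $s$, it is bounded by its value at $\theta=1$, namely $\log\tfrac{\alpha^2}{\alpha^2-\rho_0\rho_1}$; taking $\inf_{\pi\in\Pi}$ then gives (\ref{eq 2.7}).

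The step I expect to require the most care is not the computation in (c) — which is short once the probability measure $\alpha e^{-\alpha t}\,dt$ is recognised and Jensen's inequality is applied twice — but the rigorous handling of the unbounded data in (a) and (b): justifying Dynkin's formula and the non-explosion argument when $V_0$ and the transition/cost rates are unbounded. These steps are routine and well documented in the cited CTMDP literature, so the work there is mainly careful bookkeeping (localisation via $T_n$ and $V_0\wedge m$).
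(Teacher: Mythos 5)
Your proposal is correct and follows essentially the same route as the paper: part (c) is the same argument built on the probability measure $\alpha e^{-\alpha t}\,dt$, two applications of Jensen's inequality, part (b), and the elementary integral $\int_0^\infty \alpha e^{\rho_0\rho_1\theta t/\alpha-\alpha t}\,dt=\alpha^2/(\alpha^2-\rho_0\rho_1\theta)$, with only the order of bounding $c$ via Assumption \ref{assm 2.1}(iii) and applying Jensen interchanged. For (a) and (b) the paper simply cites \cite{GHS} and \cite{GP}, and your Dynkin--Gr\"onwall localization sketch is the standard argument behind those citations; your monotone-average computation also correctly supplies the ``direct calculation'' the paper leaves implicit for (\ref{eq 2.7}).
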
 
		\begin{proof}
 For parts $(a)$ and $(b)$, see, \cite{GHS} and ( \cite{GP}, Theorem 3.1 ).\\
    Proof of part (c): 	Let $\beta(dt)=\alpha e^{-\alpha t}dt$, which is a probability measure on $[0,\infty).$ For any $\pi\in\Pi$ and $(\theta,x)\in[0,1]\times S$, by (\ref{eq 2.6}) and Jensen's inequality we have
    \interdisplaylinepenalty=0
    \begin{align*}
    \tilde{J}_\alpha(\theta,x,\pi)&= {E}^{\pi}_x\biggl[exp\biggl(\int_{0}^{\infty}\frac{\theta}{\alpha}\int_{A}c(\xi_t(\omega),a)\pi(da|\omega,t)\beta(dt)\biggr)\biggr]\\
    &\leq {E}^{\pi}_x\biggl[\int_{0}^{\infty}exp\biggl(\frac{\theta}{\alpha}\int_{A}c(\xi_t(\omega),a)\pi(da|\omega,t)\biggr)\beta(dt)\biggr].
    \end{align*}
    By Assumption \ref{assm 2.1} and part (b) we obtain
    \interdisplaylinepenalty=0
    \begin{align*}
    \tilde{J}_\alpha(\theta,x,\pi) &\leq {E}^{\pi}_x\biggl[\int_{0}^{\infty}exp\biggl(\frac{\theta}{\alpha}(\rho_1\log{V_0(\xi_t)}+L_0)\biggr)\beta(dt)\biggr]\\
    &=e^{{\theta L_0}/{\alpha}}\biggl[\int_{0}^{\infty}{E}^{\pi}_x\biggl(V_0(\xi_t)^{\frac{\rho_1\theta}{\alpha}}\biggr)\beta(dt)\biggr]\\
    &\leq e^{{\theta L_0}/{\alpha}}\biggl[\int_{0}^{\infty}({E}^{\pi}_x[V_0(\xi_t)])^{\frac{\rho_1\theta}{\alpha}}\beta(dt)\biggr]~~~(\text{since}~~ \rho_1\theta<\alpha)\\
    &\leq \alpha e^{{\theta L_0}/{\alpha}} [V_0(x)]^{\frac{\rho_1\theta}{\alpha}}\biggl[\int_{0}^{\infty}exp\biggl(\frac{\rho_0\rho_1\theta t}{\alpha}-\alpha t\biggr)dt\biggr]\\
    &={\frac{\alpha^2}{\alpha^2-\rho_0\rho_1\theta}} e^{{\theta L_0}/{\alpha}}[V_0(x)]^{\frac{\rho_1\theta}{\alpha}}~~~(\text{since}~~\rho_0\rho_1\theta<\alpha^2).
    \end{align*}
    Hence, we have $\sup_{\theta\in [0,1]}\tilde{J}^{*}_\alpha(\theta,x)\leq \frac{\alpha^2}{\alpha^2-\rho_0\rho_1} e^{{ L_0}/{\alpha}}V_0(x)$, and 
    \interdisplaylinepenalty=0
    \begin{align*}
    \sup_{\theta\in (0,1]}\mathscr{J}^{*}_\alpha(\theta,x)= \sup_{\theta\in (0,1]}\frac{1}{\theta}\log{\tilde{J}^*_\alpha(\theta,x)}\leq \sup_{\theta\in (0,1]}\frac{1}{\theta}\biggl(\log{\frac{\alpha^2}{\alpha^2-\rho_0\rho_1\theta}}\biggr)+\frac{L_0}{\alpha}+\frac{\rho_1}{\alpha}\log V_0(x).
    \end{align*}
   Now, by a direct calculation, one can show that (\ref{eq 2.7}) holds. 
\end{proof}
Here we assume the following conditions, so that we can apply the Feynman-Kac formula formula for  a large enough class of functions, which had been widely used in CTMDPs; see, for instance, [\cite{GHH}, \cite{GL}, \cite{GLZ}, \cite{GZ}, \cite{W}].
\begin{assumption}\label{assm 2.2}
There exist a Borel measurable function $V_1\geq 1$ on $S$, and constants $0<\rho_2<\alpha$, $b_1\geq 0$, $M_1\geq 1$ such that
\begin{enumerate}
	\item [(i)]	
	$\int_{S}q(dy|x,a)V^2_1(y)\leq \rho_2V^2_1(x)+b_1~~\forall~(x,a)\in K$,
	\item [(ii)] $V_0^2(x)\leq M_1 V_1(x)~~\forall~x\in S$ where $V_0$ is introduced in Assumption \ref{assm 2.1}.
\end{enumerate}
\end{assumption}
We now introduce some frequently used notations. 
\begin{itemize}
	\item For any Borel space $X$, $\mathcal{B}(X)$ denotes the corresponding Borel $\sigma$-algebra.
	\item $C^\infty_c(a,b)$ denotes the set of all infinitely differentiable functions on $(a,b)$ with compact support.
	\item Let $A_c([0,1]\times S)$ denote the space of all functions which are real-valued and differentiable almost everywhere with respect to $\theta\in[0,1]$eg. When the partial derivative (with respect to $\theta\in [0,1]$) does not exist for some $(\theta,x)\in [0,1]\times S$, we take $\frac{\partial\varphi_\alpha}{\partial \theta}(\theta,x)$ to be any real number, and so $\frac{\partial\varphi_\alpha}{\partial \theta}(\cdot,\cdot)$ is defined on $[0,1]\times S$. Given any real-valued function $W\geq 1$ on $S$ and any Borel set $X$, a real-valued function $u$ on $X\times S$ is called $W$ bounded if $\|u\|^\infty_{W}:=\sup_{(\theta,x)\in X\times S}\frac{|u(\theta,x)|}{W(x)}< \infty$. Denote $B_{W}(X\times S)$ the Banach space of all $W$-bounded functions. When $W\equiv1$, $B_{1}([0,1]\times S)$ is the space of all bounded functions on $[0,1]\times S.$\\
	Now define $B^1_{W_0,W_1}([0,1]\times S):=\{\varphi_{\alpha}\in B_{W_0}([0,1]\times S)\cap A_c([0,1]\times S):\frac{\partial\varphi_\alpha}{\partial \theta}\in B_{W_1}([0,1]\times S)\}$. 
\end{itemize}

	
\section{stochastic representation of a solution to the HJB equation}
In this section, we prove that if the HJB equation for the cost criterion (\ref{eq 2.6}) has a solution then we will give a stochastic representation of that solution. Using dynamic programming heuristics, the HJB equations for the discounted cost criterion (\ref{eq 2.6}) is given by
\interdisplaylinepenalty=0
\begin{align}
\left\{ \begin{array}{ll}\alpha\theta \frac{\partial\varphi_\alpha}{\partial \theta}(\theta,x)&=\displaystyle{\inf_{a\in A(x)}\biggl[\int_{S}q(dy|x,a)\varphi_\alpha(\theta,y)+\theta c(x,a)\varphi_\alpha(\theta,x)\biggr]},\\&1\leq \varphi_\alpha(\theta,x)\leq {\frac{\alpha^2}{\alpha^2-\rho_0\rho_1\theta}} e^{{\theta L_0}/{\alpha}}(V_0(x))^{\frac{\rho_1\theta}{\alpha}}~~\text{for}~~(\theta,x)\in[0,1]\times S,\label{eq 3.1}
\end{array}\right.
\end{align}
 for each $x\in S$ and a.e. $\theta\in [0,1]$ where the upper bound of $\varphi_\alpha(\theta,x)$  is inspired by Proposition \ref{prop 2.1}.
	
To ensure the existence of optimal control, in addition to Assumptions \ref{assm 2.1} and  \ref{assm 2.2}, we also need the following continuity and compactness conditions.
\begin{assumption}\label{assm 3.1}
	The following conditions hold:
	\begin{enumerate}
		\item [(i)] for each $x\in S$, the set $A(x)$ is compact;
		\item  [(ii)]for any fixed $x\in S$, $q(\cdot | x, a)$ and $c(x, a)$ are continuous in $a\in A(x)$;
		
		\item [(iii)]for any given $x\in S$, the function $\displaystyle \int_{S}V_0(y)q(dy|x,a)$ is continuous in $a\in A(x)$, where $V_0$ is introduced in Assumption \ref{assm 2.1}.
	\end{enumerate}
\end{assumption}

 In the next theorem we show that if the HJB equation has a solution then its stochastic representation is equal to the value function corresponding to the cost criterion (\ref{eq 2.6}).
\begin{thm}\label{theo 3.1}
Under Assumptions \ref{assm 2.1}, \ref{assm 2.2}, and \ref{assm 3.1} suppose that the HJB equation (\ref{eq 3.1}) has a solution $\varphi_\alpha\in B^1_{V_0,V_1}([0,1]\times S)$ satisfying the bounds. Then, for all $(\theta,x)\in [0,1]\times S$, we have the probabilistic representation of $\varphi_\alpha$ as
	\interdisplaylinepenalty=0
	\begin{align}
	\varphi_\alpha(\theta,x)=\inf_{\pi\in \Pi}E^{\pi}_x\biggl[exp\biggl(\theta\int_{0}^{\infty}\int_{A}e^{-\alpha t}c(\xi_t,a)\pi(da|\omega,t)dt\biggr)\biggr]\label{eq 3.2}
	\end{align}
	which means $\varphi_\alpha(\theta,x)=\tilde{J}^{*}_\alpha(\theta,x)$ for all $(\theta,x)\in [0,1]\times S$.
	\end{thm}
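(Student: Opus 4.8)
The plan is to prove the two inequalities $\varphi_\alpha(\theta,x)\le \tilde J^*_\alpha(\theta,x)$ and $\varphi_\alpha(\theta,x)\ge \tilde J^*_\alpha(\theta,x)$ separately, in both cases by applying an appropriate Feynman--Kac / Dynkin-type formula to a process built from $\varphi_\alpha$ along the controlled trajectory $\{\xi_t\}$. For a fixed $(\theta,x)\in[0,1]\times S$ and a control $\pi\in\Pi$, the natural candidate is the process
\[
M_t:=\exp\!\Bigl(\theta\int_0^t e^{-\alpha s}\!\int_A c(\xi_s,a)\,\pi(da|\omega,s)\,ds\Bigr)\,\varphi_\alpha\bigl(\theta e^{-\alpha t},\xi_t\bigr),
\]
where the key observation is that the ``running'' risk parameter $\theta e^{-\alpha t}$ decays so that $\frac{d}{dt}(\theta e^{-\alpha t}) = -\alpha(\theta e^{-\alpha t})$, which is exactly the coefficient appearing on the left-hand side of the HJB equation (\ref{eq 3.1}). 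Computing the (formal) generator of $M_t$ and using the HJB equation, one finds that $M_t$ is a $P^\pi_x$-submartingale for every $\pi$, and a martingale when $\pi$ is chosen to realize the infimum in (\ref{eq 3.1}) at each $(\theta e^{-\alpha t},\xi_{t})$ (such a measurable minimizing selector exists by Assumption \ref{assm 3.1}(i)--(iii) together with a standard measurable selection theorem, e.g.\ as in the cited CTMDP references).

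First I would establish the integrability needed to make this rigorous: using the bound $1\le\varphi_\alpha(\theta,x)\le \frac{\alpha^2}{\alpha^2-\rho_0\rho_1\theta}e^{\theta L_0/\alpha}(V_0(x))^{\rho_1\theta/\alpha}$ from the statement, Assumption \ref{assm 2.1}(iii) on $c$, and the Lyapunov estimates of Proposition \ref{prop 2.1}(b)--(c) (and the second-moment Lyapunov bound from Assumption \ref{assm 2.2}), one controls $E^\pi_x[M_t]$ uniformly in $t$ and controls the jump-martingale term $\int_S q(dy|\xi_s,a)\varphi_\alpha(\theta e^{-\alpha s},y)$ via the $V_1$-bound on $\partial_\theta\varphi_\alpha$ — this is where Assumption \ref{assm 2.2} enters, ensuring the local martingale part is a genuine martingale rather than merely a local one. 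Then the submartingale property gives $\varphi_\alpha(\theta,x)=M_0\le E^\pi_x[M_t]$, and letting $t\to\infty$ (with $\theta e^{-\alpha t}\to 0$, $\varphi_\alpha(0,\xi_t)\equiv 1$ by the lower bound, and dominated convergence justified by the uniform integrability just obtained) yields $\varphi_\alpha(\theta,x)\le \tilde J_\alpha(\theta,x,\pi)$; taking the infimum over $\pi\in\Pi$ gives one direction. For the reverse direction, I would fix the deterministic Markov selector $f^*$ attaining the infimum in (\ref{eq 3.1}), run the same argument with $M_t$ now a true martingale under $P^{f^*}_x$, obtain $\varphi_\alpha(\theta,x)=E^{f^*}_x[M_t]\to \tilde J_\alpha(\theta,x,f^*)\ge \tilde J^*_\alpha(\theta,x)$, and combine.

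The main obstacle I anticipate is the justification of the Feynman--Kac / Dynkin formula itself in this unbounded-rate setting and the passage to the limit $t\to\infty$: one must verify that $t\mapsto\varphi_\alpha(\theta e^{-\alpha t},\xi_t)$ is absolutely continuous along trajectories (using $\varphi_\alpha\in A_c([0,1]\times S)$ and that between jumps only the $\theta$-variable moves), that the compensated sum of jumps $\sum_{k}[\varphi_\alpha(\theta e^{-\alpha T_k},X_k)-\varphi_\alpha(\theta e^{-\alpha T_k},X_{k-1})]$ minus its $q$-intensity integral is a martingale (not just a local one), and that the exponential weight times $\varphi_\alpha$ stays in the domain where all these manipulations are licensed. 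Establishing non-explosion — already granted by Proposition \ref{prop 2.1}(a) — removes one difficulty, but the integrability bookkeeping tying together Assumptions \ref{assm 2.1} and \ref{assm 2.2} with the prescribed upper bound on $\varphi_\alpha$ is the delicate technical heart of the proof; everything else is a fairly standard verification-theorem argument.
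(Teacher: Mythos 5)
Your proposal is correct and follows essentially the same route as the paper: the paper's function $g(t,\xi_t,\omega)$ is exactly your $M_t$, the submartingale/martingale dichotomy is obtained from the HJB inequality together with a measurable selector via [\cite{BS}, Proposition 7.33], and the Feynman--Kac formula of [\cite{GLZ}, Theorem 3.1] is justified by the same $V_0$, $V_1$ integrability estimates you indicate. The only (minor) difference is in the passage $t\to\infty$: where you invoke uniform integrability and dominated convergence, the paper carries this out concretely via H\"older's inequality with exponent $p>1$, Jensen's inequality and Proposition \ref{prop 2.1}(b) to show the $\varphi_\alpha(\theta e^{-\alpha t},\xi_t)$ factor contributes a bound $T_3(t)\to 1$, then lets $p\downarrow 1$, and uses Fatou's lemma with $\varphi_\alpha\geq 1$ for the optimal-selector direction.
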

\begin{proof} 
	First we see that
	\begin{align*}
\biggl[\theta c(x,a)\varphi_\alpha(\theta,x)+\int_{S}q(dy|x,a)\varphi_\alpha(\theta,y)\biggr] 
	\end{align*}
	is continuous in $a\in A(x)$ and $A(x)$ is compact. So by measurable selection theorem, [\cite{BS},Proposition 7.33], there exists a measurable function $f^{*}:[0,1]\times S\rightarrow A$ such that 
		\begin{align}
	&\inf_{a\in A(x)}\biggl[\theta c(x,a)\varphi_\alpha(\theta,x)+\int_{S}q(dy|x,a)\varphi_\alpha(\theta,y)\biggr]\nonumber\\
	&=\biggl[\theta c(x,f^{*}(\theta,x))\varphi_\alpha(\theta,x)+\int_{S}q(dy|x,f^{*}(\theta,x))\varphi_\alpha(\theta,y)\biggr]. \label{eq 3.3} 
	\end{align}
	Let
	\begin{equation*}
	\pi^{*}:\mathbb{R}_+\times S \to P(A)  \
	\end{equation*}
	be defined by
	\begin{align*}
	&\pi^{*}(\cdot|t,x)=I_{\{\hat{f}^{*}(t,x)\}}(\cdot),~\text{where}~\hat{f}^{*}:[0,1]\times S\rightarrow A,\\&\quad~\text{be a measurable mapping, defined by}~ \hat{f}^{*}(t,x):=f^{*}(\theta e^{-\alpha t},x).
	\end{align*}
	When $\frac{\partial\varphi_{\alpha}}{\partial\theta}$ does not exist for some $(\theta,x)$, we define
	\begin{align*}
	\alpha\theta\frac{\partial\varphi_\alpha}{\partial\theta}(\theta,x)
	&= \inf_{a\in A(x)}\displaystyle\biggl[ \theta c(x,a)\varphi_\alpha(\theta,x)+\int_{S}q(dy|x,a)\varphi_\alpha(\theta,y)\biggr].
	\end{align*}
Then we observe from equation (\ref{eq 3.1}) that for any $(\theta,x)\in [0,1]\times S$ and $a\in A(x)$ that
	\begin{align}
	-\alpha\theta \frac{\partial\varphi_\alpha}{\partial \theta}(\theta,x)+\displaystyle{\biggl[\int_{S}q(dy|x,a)\varphi_\alpha(\theta,y)+\theta c(x,a)\varphi_\alpha(\theta,x)\biggr]}\geq 0\label{eq 3.4}.
	\end{align}
	For any history-dependent control $\pi\in\Pi$ and $\theta\in [0,1]$,
let $\{\xi_t, t\geq 0\}$ be the corresponding process, and define  $\theta(t):=\theta e^{-\alpha t}$. Now for each $\omega\in\Omega$, by equation (\ref{eq 3.4}), we get
\begin{align}
-\alpha\theta(s)& \frac{\partial\varphi_\alpha}{\partial \theta}(\theta(s),\xi_s(\omega))\nonumber\\&+\displaystyle{\biggl[\int_{S}\int_{A}q(dy|\xi_s(\omega),a)\varphi_\alpha(\theta(s),y)\pi(da|\omega,s)+\theta(s) \int_{A}c(\xi_s(\omega),a)\varphi_\alpha(\theta(s),\xi_s(\omega))\pi(da|\omega,s)\biggr]}\geq 0\label{eq 3.5}
\end{align}
  and define $g: [0,\infty)\times S\times\Omega \to [0, \infty)$ by
$$g(t,x,\omega):=exp\biggl(\int_{0}^{t}\int_{A}\theta(s)c(\xi_s(\omega),a)\pi(da|\omega,s)ds\biggr)\varphi_\alpha(\theta(t),x).$$  
Let $\beta(ds):=\alpha e^{-\alpha s}ds$. Then under Assumptions \ref{assm 2.1} and \ref{assm 2.2}, we have
\interdisplaylinepenalty=0
\begin{align}
&E^{\pi}_x\biggl[exp\biggl(\int_{0}^{t}\int_{A} 2e^{-\alpha s}c(\xi_s,a)\pi(da|\omega,s)ds\biggr)\biggr]\nonumber\\
&\leq E^{\pi}_x\biggl[exp\biggl(\int_{0}^{\infty}\frac{2}{\alpha}\int_{A} c(\xi_s,a)\pi(da|\omega,s)\beta(ds)\biggr)\biggr]\nonumber\\
&\leq E^{\pi}_x\biggl[\int_{0}^{\infty}exp\biggl( \frac{2}{\alpha}\int_{A}c(\xi_s,a)\pi(da|\omega,s)\biggr)\beta(ds)\biggr]\nonumber\\
&~(\text{by Jensen's inequality})\nonumber\\
&\leq E^{\pi}_x\biggl[\int_{0}^{\infty}exp\biggl(\frac{2}{\alpha}(\rho_1\log{V_0(\xi_s)}+L_0)\biggr)\beta(ds)\biggr]\nonumber\\
&~(\text{by Assumption  \ref{assm 2.1}})\nonumber\\
&\leq e^{{2L_0}/{\alpha}}\biggl[\int_{0}^{\infty}E^{\pi}_x\biggl(V_0(\xi_s)^{\frac{4\rho_1}{\alpha}}\biggr)\beta(ds)\biggr]\nonumber\\
&\leq e^{{2L_0}/{\alpha}}M_1^2\biggl[\int_{0}^{\infty}E^{\pi}_x\biggl(V_1(\xi_s)^{\frac{2\rho_1}{\alpha}}\biggr)\beta(ds)\biggr]\nonumber\\
&\leq \alpha e^{{2L_0}/{\alpha}}M_1^2\biggl(V_1^2(x)+\frac{b_1}{\rho_2}\biggr)\biggl[\int_{0}^{\infty}e^{\rho_2 s-\alpha s}ds\biggr]\nonumber\\
&\leq\frac{\alpha e^{{2L_0}/{\alpha}}}{\alpha-\rho_2}M_1^2\biggl(V_1^2(x)+\frac{b_1}{\rho_2}\biggr).\label{eq 3.6}
\end{align}
Now using Assumptions \ref{assm 2.1} and \ref{assm 2.2}, we obtain
\begin{align}
&\biggl|\biggl[-\alpha \theta(s)\frac{\partial\varphi_\alpha}{\partial\theta}(\theta(s),\xi_s(\omega))+\int_{S}\int_{A}q(dy|\xi_s(\omega),a)\varphi_\alpha(\theta(s),y)\pi(da|\omega,s)\nonumber\\
&\quad+\theta(s)\int_{A}c(\xi_s(\omega),a)\varphi_\alpha(\theta(s),\xi_s(\omega))\pi(da|\omega,s)\biggr]\biggr|\nonumber\\
&\leq \biggl[\alpha \biggl\|\frac{\partial\varphi_\alpha}{\partial\theta}\biggr\|_{V_1}V_1(\xi_s)+\|\varphi_{\alpha}\|_{V_0}\biggl(\int_{S}\int_{A}q(dy|\xi_s(\omega),a)V_0(y)\pi(da|\omega,s)+2V_0^2(\xi_s)M_0\biggr)\nonumber\\
&\quad+\|\varphi_{\alpha}\|_{V_0}(\rho_1\log V_0(\xi_s)+L_0)V_0(\xi_s)\biggr]\nonumber\\
&\leq  \biggl[\alpha \biggl\|\frac{\partial\varphi_\alpha}{\partial\theta}\biggr\|_{V_1}V_1(\xi_s)+\|\varphi_{\alpha}\|_{V_0}(\rho_0V_0(\xi_s)+2V_0^2(\xi_s)M_0)+(\rho_1V_0(\xi_s)+L_0)\|\varphi_{\alpha}\|_{V_0}V_0(\xi_s)\biggr]\nonumber\\
&\leq  \biggl[\alpha \biggl\|\frac{\partial\varphi_\alpha}{\partial\theta}\biggr\|_{V_1}V_1(\xi_s)+\|\varphi_{\alpha}\|_{V_0}(\rho_0+2M_0)M_1V_1(\xi_s)+(\rho_1+L_0)M_1V_1(\xi_s)\|\varphi_{\alpha}\|_{V_0}\biggr]\nonumber\\
&= \biggl[\alpha \biggl\|\frac{\partial\varphi_\alpha}{\partial\theta}\biggr\|_{V_1}+\|\varphi_{\alpha}\|_{V_0}(\rho_0+2M_0+\rho_1+L_0)M_1\biggr]V_1(\xi_s).\label{eq 3.7}
\end{align}

Now by (\ref{eq 3.6}), we get
\begin{align*}
&\biggl|E^{\pi}_x\biggl[exp\biggl(\int_{0}^{s}\int_{A}\theta(v)c(\xi_v(\omega),a)\pi(da|\omega,v)dv\biggr)V_1(\xi_s)\biggr]
\biggr|\nonumber\\
&\leq \biggl|E^{\pi}_x\biggl[exp\biggl(\int_{0}^{s}\int_{A}2\theta(v)c(\xi_v(\omega),a)\pi(da|\omega,v)dv\biggr)\biggr]E^{\pi}_x[V^2_1(\xi_s)]
\biggr|\nonumber\\
&\leq \frac{\alpha e^{{2L_0}/{\alpha}}}{\alpha-\rho_2}M_1^2\biggl(V_1^2(x)+\frac{b_1}{\rho_2}\biggr)e^{\rho_2s}\biggl(V_1^2(x)+\frac{b_1}{\rho_2}\biggr).
\end{align*}
Therefore we have
\begin{align}
&\biggl|E^{\pi}_x\biggl[\int_{0}^{t}exp\biggl(\int_{0}^{s}\int_{A}\theta(v)c(\xi_v(\omega),a)\pi(da|\omega,v)dv\biggr)V_1(\xi_s)ds\biggr]
\biggr|\nonumber\\
&\leq \frac{\alpha e^{{2L_0}/{\alpha}}}{\alpha-\rho_2}M_1^2\biggl(V_1^2(x)+\frac{b_1}{\rho_2}\biggr)\biggl(V_1^2(x)+\frac{b_1}{\rho_2}\biggr)\int_{0}^{t}e^{\rho_2s}ds\nonumber\\
&\leq \frac{\alpha e^{{2L_0}/{\alpha}}}{\alpha-\rho_2}M_1^2\biggl(V_1^2(x)+\frac{b_1}{\rho_2}\biggr)^2\frac{e^{\rho_2t}}{\rho_2}<\infty.\label{eq 3.8}
\end{align}
So by (\ref{eq 3.7}) and (\ref{eq 3.8}), we say
\begin{align*}
&\biggl|E^{\pi}_x\biggl\{\int_{0}^{t}exp\biggl(\int_{0}^{s}\int_{A}\theta(v)c(\xi_v(\omega),a)\pi(da|\omega,v)dv\biggr)\nonumber\\
&\times\biggl[-\alpha \theta(s)\frac{\partial\varphi_\alpha}{\partial\theta}(\theta(s),\xi_s(\omega))+\int_{S}\int_{A}q(dy|\xi_s(\omega),a)\varphi_\alpha(\theta(s),y)\pi(da|\omega,s)\nonumber\\
&+\theta(s)\int_{A}c(\xi_s(\omega),a)\varphi_\alpha(\theta(s),\xi_s(\omega))\pi(da|\omega,s)\biggr]ds\biggr\}\biggr|<\infty.
\end{align*}

Thus, using the extension of Feynman-Kac formula in [ \cite{GLZ}, Theorem 3.1] to the function $g$, we have
\interdisplaylinepenalty=0
\begin{align}
&E^{\pi}_x[g(t,\xi_t(\omega),\omega)]-\varphi_\alpha(\theta,x)\nonumber\\
&=E^{\pi}_x\biggl\{\int_{0}^{t}exp\biggl(\int_{0}^{s}\int_{A}\theta(v)c(\xi_v(\omega),a)\pi(da|\omega,v)dv\biggr)\nonumber\\
&\times\biggl[-\alpha \theta(s)\frac{\partial\varphi_\alpha}{\partial\theta}(\theta(s),\xi_s(\omega))+\int_{S}\int_{A}q(dy|\xi_s(\omega),a)\varphi_\alpha(\theta(s),y)\pi(da|\omega,s)\nonumber\\
&+\theta(s)\int_{A}c(\xi_s(\omega),a)\varphi_\alpha(\theta(s),\xi_s(\omega))\pi(da|\omega,s)\biggr]ds\biggr\}.\label{eq 3.9}
\end{align}
Now from (\ref{eq 3.5}) and (\ref{eq 3.9}), we have 
\begin{align}
\varphi_\alpha(\theta,x)\leq E^{\pi}_x\biggl[exp\biggl(\int_{0}^{t}\int_{A}\theta(s)c(\xi_s,a)\pi(da|\omega,s)ds\biggr)\varphi_\alpha(\theta(t),\xi_t)\biggr].\label{eq 3.10}
\end{align}
Given any $p>1$, let $q>1$ such that $\frac{1}{p}+\frac{1}{q}=1$, by Holder's inequality we have 
\interdisplaylinepenalty=0
\begin{align}
&\varphi_\alpha(\theta,x)\nonumber\\
&\leq E^{\pi}_x\biggl[exp\biggl(\int_{0}^{t}\int_{A}\theta(s)c(\xi_s,a)\pi(da|\omega,s)ds\biggr)\varphi_\alpha(\theta(t),\xi_t)\biggr]\nonumber\\
&\leq \biggl\{E^{\pi}_x\biggl[exp\biggl(p\int_{0}^{t}\int_{A}\theta(s) c(\xi_s,a)\pi(da|\omega,s)ds\biggr)\biggr]\biggr\}^{{1}/{p}}\nonumber\\
&~~~~~~~~~~~~~~~~~~~~~~~\times\biggl\{E^{\pi}_x[\varphi^q_\alpha(\theta(t),\xi_t)]\biggr\}^{{1}/{q}}\nonumber\\
&=:T_1(p,t)\cdot T_2(q,t).\label{eq 3.11}
\end{align}
For $T_2(q,t):=\{E^{\pi}_x[\varphi^q_\alpha(\theta(t),\xi_t)]\}^{{1}/{q}}$, by the upper bound of $\varphi_\alpha$ in (\ref{eq 3.1}), we have 
\begin{align*}
\varphi_\alpha(\theta(t),\xi_t)=\varphi_\alpha(\theta e^{-\alpha t},\xi_t)\leq \frac{\alpha^2}{\alpha^2-\theta e^{-\alpha t}\rho_0\rho_1}exp\biggl(\frac{\theta e^{-\alpha t} L_0}{\alpha}\biggr) [V_0(\xi_t)]^{\frac{\rho_1\theta e^{-\alpha t}}{\alpha}}.
\end{align*}
If $t>\alpha^{-1}\log({\theta q\rho_1}/{\alpha})$ then ${\theta e^{-\alpha t}q\rho_1}/{\alpha}<1$.
Hence, by Jensen's inequality and Proposition \ref{prop 2.1}(b), we obtain
\interdisplaylinepenalty=0
\begin{align}
T_2(q,t)\leq &\biggl\{E^{\pi}_x\biggl[\biggl(\frac{\alpha^2}{\alpha^2-\theta e^{-\alpha t}\rho_0\rho_1}\biggr)^q exp\biggl(\frac{q\theta e^{-\alpha t } L_0}{\alpha}\biggr)  [V_0(\xi_t)]^{\frac{q\rho_1\theta e^{-\alpha t}}{\alpha}}\biggr]\biggr\}^{{1}/{q}}\nonumber\\
&=\frac{\alpha^2}{\alpha^2-\theta e^{-\alpha t}\rho_0\rho_1} exp\biggl(\frac{\theta e^{-\alpha t } L_0}{\alpha}\biggr) \biggl[E^{\pi}_x[V_0^{\frac{q\rho_1\theta e^{-\alpha t}}{\alpha}}(\xi_t)]\biggr]^{\frac{1}{q}}\nonumber\\
&\leq \frac{\alpha^2}{\alpha^2-\theta e^{-\alpha t}\rho_0\rho_1} exp\biggl(\frac{\theta e^{-\alpha t } L_0}{\alpha}\biggr) [E^{\pi}_x(V_0(\xi_t))]^{\frac{\rho_1\theta e^{-\alpha t}}{\alpha}}\nonumber\\
&\leq \frac{\alpha^2}{\alpha^2-\theta e^{-\alpha t}\rho_0\rho_1} exp\biggl(\frac{\theta e^{-\alpha t }}{\alpha}(L_0+\rho_0 \rho_1 t)\biggr) V^{\frac{\theta e^{-\alpha t}\rho_1}{\alpha}}_0(x)=:T_3(t).\label{eq 3.12}
\end{align}
By letting $t \to \infty$ we obtain
\interdisplaylinepenalty=0
\begin{align}
&T_1(p,t)\rightarrow \biggl\{E^{\pi}_x\biggl[exp\biggl(p\int_{0}^{\infty}\int_{A}\theta(s)c(\xi_s,a)\pi(da|\omega,s)ds\biggr)\biggr]\biggr\}^{{1}/{p}}\nonumber\\
&\text{and}~\nonumber\\
&T_3(t)\rightarrow 1.\label{eq 3.13}
\end{align}
 Combining (\ref{eq 3.11}),  (\ref{eq 3.12}) and  (\ref{eq 3.13}), we obtain
 \interdisplaylinepenalty=0
 \begin{align*}
 \varphi_\alpha(\theta,x)\leq \biggl \{E^{\pi}_x\biggl[exp\biggl(p\theta\int_{0}^{\infty}\int_{A}e^{-\alpha t}c(\xi_t,a)\pi(da|\omega,t)dt\biggr)\biggr]\biggr\}^{{1}/{p}},
\end{align*}
 for $p>1$.
 Then, passing to the limit as $p\downarrow 1$, we obtain
 \begin{align*}
 \varphi_\alpha(\theta,x)\leq  E^{\pi}_x\biggl[exp\biggl(\theta\int_{0}^{\infty}\int_{A}e^{-\alpha t}c(\xi_t,a)\pi(da|\omega,t)dt\biggr)\biggr].
 \end{align*}
 Since $\pi\in \Pi$ is arbitrary control, we have
 \interdisplaylinepenalty=0
   \begin{align}
   \varphi_\alpha(\theta,x)&\leq  \inf_{\pi\in \Pi}E^{\pi}_x\biggl[exp\biggl(\theta\int_{0}^{\infty}\int_{A}e^{-\alpha t}c(\xi_t,a)\pi(da|\omega,t)dt\biggr)\biggr].\label{eq 3.14}
   \end{align}
Using (\ref{eq 3.1}), (\ref{eq 3.3}) and (\ref{eq 3.9}), we can show that
 \begin{align}
 &E^{\pi^{*}}_x\biggl[exp\biggl(\int_{0}^{t}\theta(s)c(\xi_s,\pi^{*}_s(da|s,\xi_s))ds\biggr)\varphi_\alpha(\theta(t),\xi_t)\biggr]= \varphi_\alpha(\theta,x).\label{eq 3.15}
 \end{align}
Now, using the lower bound of $\varphi_\alpha$ in (\ref{eq 3.1}) and Fatou's lemma, we obtain
\interdisplaylinepenalty=0
 \begin{align}
&\liminf_{t\rightarrow\infty}E^{\pi^{*}}_x\biggl[exp\biggl(\int_{0}^{t}\int_{A}\theta(s)c(\xi_s,a)\pi^{*}(da|s,\xi_s)ds\biggr)\varphi_\alpha(\theta(t),\xi_t)\biggr]\nonumber\\
&\geq \liminf_{t\rightarrow\infty}E^{\pi^{*}}_x\biggl[exp\biggl(\int_{0}^{t}\int_{A}\theta(s)c(\xi_s,a)\pi^{*}(da|s,\xi_s)ds\biggr)\biggr]\nonumber\\ 
&\geq E^{\pi^{*}}_x\biggl[\liminf_{t\rightarrow\infty}exp\biggl(\int_{0}^{t}\int_{A}\theta(s)c(\xi_s,a)\pi^{*}(da|s,\xi_s)ds\biggr)\biggr]\nonumber\\
&=\tilde{J}_\alpha(\theta,x,\pi^{*}).\label{eq 3.16}
 \end{align}
 From (\ref{eq 3.15}) and  (\ref{eq 3.16}), we have
 \begin{align*}
  \tilde{J}_\alpha(\theta,x,\pi^{*})\leq \varphi_\alpha(\theta,x).
  \end{align*}
   Thus
 \begin{align}
 \inf_{\pi\in\Pi} \tilde{J}_\alpha(\theta,x,\pi)\leq  \tilde{J}_\alpha(\theta,x,\pi^{*})\leq \varphi_\alpha(\theta,x).\label{eq 3.17}
 \end{align}
From (\ref{eq 3.14}) and (\ref{eq 3.17}), we have (\ref{eq 3.2}).
 \end{proof}

\section{The existence of solution to the HJB equation}\label{EHJB}
In this Section, we prove that the equation (\ref{eq 3.1}) is the HJB equation for the  $\alpha$ discounted cost  (\ref{eq 2.6}) and the equation (\ref{eq 3.1}) has a solution in $ B^1_{V_0,V_1}([0,1]\times S)$. We now proceed to make a rigorous analysis of the above. First we truncate our transition and cost rates
which plays a crucial role to derive the HJB equations and find the solution. Fix any $n\geq 1$, $0<\delta<1$. For each $n\geq 1$, $x\in S$, $a\in A(x)$, let $A_n(x):=A(x)$, $S_n:=\{x\in S|V_0(x)\leq n\}$, and $K_n:=\{(x,a)|x\in S_n,a\in A_n(x)\}$. Moreover for each $x\in S$, $a\in A_n(x)$
	define 	\begin{align}
q^{(n)}(dy|x,a)	:=\left\{ \begin{array}{ll}	
	q(dy|x,a)~\text{ if}~x\in S_n,\\
	0~\text{ if }~x\notin S_n\label{eq 4.1}
	\end{array}\right.
	\end{align}
	and 
		\begin{align}
	c_n(x,a):=\left\{ \begin{array}{ll}	
	c(x,a)\wedge \text{ min}\{n,\rho_1\ln V_0(x)+L_0\}~\text{ if}~x\in S_n,\\
	0~\text{ if }~x\notin S_n.\label{eq 4.2}
	\end{array}\right.
	\end{align}
\begin{lemma}\label{lemm 4.1}
	Suppose Assumptions \ref{assm 2.1}, \ref{assm 2.2} and \ref{assm 3.1} are satisfied. 
Then, there exists a unique function $\varphi^{(n,\delta)}_\alpha$  (depending on $n$, $\delta$) in $B^1_{V_0,V_1}([0,1]\times S)$ for  which the followings are true :
	\begin{enumerate}
\item  		 $\varphi^{(n,\delta)}_\alpha\in B_{1}([0,1]\times S)$ is a bounded solution to the following  partial differential equations (PDEs) for all $x\in S$ and a.e. $\theta\in (\delta,1]:$
\interdisplaylinepenalty=0
	\begin{align}
\left\{ \begin{array}{ll}	\alpha\theta\frac{\partial\varphi^{(n,\delta)}_\alpha}{\partial\theta}(\theta,x)
&=\displaystyle{\inf_{a\in  A(x)}\biggl[\theta c_n(x,a)\varphi^{(n,\delta)}_\alpha(\theta,x)+\int_{S}q^{(n)}(dy|x,a)\varphi^{(n,\delta)}_\alpha(\theta,y)\biggr]}\\
 \varphi^{(n,\delta)}_\alpha(\delta,x)&=e^{{n\delta}/{\alpha}}.\label{eq 4.3}
\end{array}\right.
\end{align}
		\item $\varphi^{(n,\delta)}_\alpha(\theta,x)$ has a stochastic representation as follows: for each $x\in S$ and a.e. $\theta\in (\delta,1]$,
		\interdisplaylinepenalty=0
\begin{align}
	\varphi_\alpha^{(n,\delta)}(\theta,x)\nonumber
&=\inf_{\pi\in \Pi}E^{\pi}_x\biggl[e^{{n\delta}/{\alpha}}exp\biggl(\theta\int_{0}^{T_\delta(\theta)}\int_{A}e^{-\alpha t}c_n(\xi^{(n)}_t,a)\pi(da|\omega,t)dt\biggr)\biggr],\nonumber\\\label{eq 4.4}
\end{align}
where $T_\delta(\theta):=\alpha^{-1}\log(\theta/ \delta)$ and $\xi^{(n)}_t$ is the process corresponding to the $q^{(n)}(\cdot|x,a)$.
		\end{enumerate}
	\end{lemma}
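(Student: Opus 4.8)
The plan is to solve the truncated problem by a Banach fixed-point argument on a carefully chosen time interval, exactly as in \cite{PP}, and then to identify the fixed point with the stochastic representation $(\ref{eq 4.4})$ by applying Theorem \ref{theo 3.1} (or rather a Feynman–Kac argument along the lines of its proof) to the truncated model. The first step is to note that, since $x\in S_n$ forces $V_0(x)\le n$ and $c_n(x,a)\le \min\{n,\rho_1\ln V_0(x)+L_0\}\le n$, both the truncated transition rate $q^{(n)}(\cdot|x,a)$ and the truncated cost $c_n(x,a)$ are \emph{bounded} on $K$ (by $2M_0n$ and $n$ respectively), and they inherit the continuity-compactness properties from Assumption \ref{assm 3.1}. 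Thus the truncated model is a bounded-rate, bounded-cost CTMDP, precisely the setting of \cite{PP}.

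Next I would set up the fixed-point map. Writing the PDE $(\ref{eq 4.3})$ in integrated form with the terminal-type condition $\varphi^{(n,\delta)}_\alpha(\delta,x)=e^{n\delta/\alpha}$, define for a bounded measurable $u$ on $[\delta,1]\times S$
\begin{align*}
(\mathcal{T}u)(\theta,x):=e^{n\delta/\alpha}+\int_{\delta}^{\theta}\frac{1}{\alpha s}\inf_{a\in A(x)}\biggl[s\,c_n(x,a)u(s,x)+\int_{S}q^{(n)}(dy|x,a)u(s,y)\biggr]ds.
\end{align*}
Using the boundedness of $c_n$ and of $q^{(n)}$ together with $\theta\ge\delta$, one checks that $\mathcal{T}$ maps $B_1([\delta,1]\times S)$ into itself and is a contraction provided $1-\delta$ is small enough; the standard trick (as in \cite{PP}) is to introduce an equivalent weighted sup-norm $\sup_{\theta,x}|u(\theta,x)|e^{-\lambda(\theta-\delta)}$ with $\lambda$ large, which makes $\mathcal{T}$ a contraction on all of $[\delta,1]$ regardless of the length of the interval. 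The measurable-selection theorem \cite[Proposition 7.33]{BS} guarantees that the infimum is attained by a measurable selector, so $\mathcal{T}u$ is well-defined and measurable. Banach's fixed point theorem then yields a unique $\varphi^{(n,\delta)}_\alpha\in B_1([\delta,1]\times S)$; differentiating the integral equation shows it lies in $A_c([\delta,1]\times S)$ and solves $(\ref{eq 4.3})$ a.e., and since it is bounded with bounded $\theta$-derivative (the derivative being controlled by $\frac{1}{\alpha\delta}$ times the bounded bracket) it belongs to $B^1_{V_0,V_1}([0,1]\times S)$ after extending arbitrarily to $[0,\delta)$. Uniqueness within $B^1_{V_0,V_1}$ follows because any such solution is automatically bounded on $S_n$ (outside $S_n$ the equation forces $\alpha\theta\,\partial_\theta\varphi=0$, handled separately) and hence is a fixed point of $\mathcal{T}$.

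For part (2), I would apply the Feynman–Kac formula \cite[Theorem 3.1]{GLZ} to the function $g(t,x,\omega):=\exp\bigl(\int_0^t\int_A\theta(s)c_n(\xi^{(n)}_s,a)\pi(da|\omega,s)ds\bigr)\varphi^{(n,\delta)}_\alpha(\theta(t),x)$ on the bounded-rate truncated model, stopping at $T_\delta(\theta)=\alpha^{-1}\log(\theta/\delta)$ (at which point $\theta(t)=\delta$, so $\varphi^{(n,\delta)}_\alpha(\theta(T_\delta(\theta)),\cdot)=e^{n\delta/\alpha}$). The subdifferential inequality coming from $(\ref{eq 4.3})$ gives $\varphi^{(n,\delta)}_\alpha(\theta,x)\le E^\pi_x[g(T_\delta(\theta),\xi^{(n)}_{T_\delta(\theta)},\omega)]$ for every $\pi\in\Pi$, hence $\varphi^{(n,\delta)}_\alpha\le\inf_\pi(\cdots)$; and using the measurable selector $f^{*}$ from $(\ref{eq 3.3})$ (with $\hat f^{*}(t,x)=f^{*}(\theta e^{-\alpha t},x)$) turns the inequality into an equality along $\pi^{*}$, giving the reverse bound. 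Here the integrability issues that made Theorem \ref{theo 3.1} delicate simply evaporate because all rates and costs are bounded and the time horizon $T_\delta(\theta)$ is finite, so no $V_1$-estimates or Hölder/Fatou gymnastics are needed. The main obstacle, and the only place requiring genuine care, is making the contraction argument uniform on the \emph{whole} interval $[\delta,1]$ rather than a short subinterval — i.e.\ choosing the weighted norm correctly and checking the boundedness of $\mathcal{T}$ near $\theta=\delta$ where the factor $1/(\alpha s)$ is largest (it is bounded by $1/(\alpha\delta)$, which is finite since $\delta>0$) — together with verifying that the unique bounded fixed point really does have the regularity needed to sit in $B^1_{V_0,V_1}([0,1]\times S)$.
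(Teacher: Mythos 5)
Your proposal follows essentially the same route as the paper: the same integrated operator with initial value $e^{n\delta/\alpha}$ solved by Banach's fixed point theorem (the paper establishes an $m$-step contraction via factorial estimates on the iterates, whereas you use an equivalent Bielecki-type weighted norm --- a cosmetic variation, since both exploit $s\ge\delta$ and the boundedness of $q^{(n)}$ and $c_n$), followed by the same Dynkin/Feynman--Kac argument with a measurable selector to pass between (\ref{eq 4.3}) and the representation (\ref{eq 4.4}) at the horizon $T_\delta(\theta)$. The only nominal slip is that the selector you invoke should be the one for the truncated Hamiltonian (the paper's $f^{*}_\delta$ in (\ref{eq 4.7})) rather than $f^{*}$ from (\ref{eq 3.3}), which is in substance what you describe.
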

\begin{proof} (1) Since $S_n:=\{x\in S|V_0(x)\leq n\}$, by Assumption \ref{assm 2.1}(ii), we say that $q^{(n)}_x(a):=\int_{S/\{x\}}q^{(n)}(dy|x,a)$ is bounded. So we can use the Lyapunov function $V\equiv 1$ such that $\int_{S}q^{(n)}(dy|x,a)V(y)\leq \rho_0 V(x)$, and $\overline{q}^{(n)}:=\sup_{(x,a)\in K}q^{(n)}_x(a)<\infty$.
		Now let us define an nonlinear operator $T$ on $B_{1}([0,1]\times S)$ as follows:
		\begin{align*}
		Tu(\theta,x)=&e^{{\delta n}/{\alpha}}+\frac{1}{\alpha}\int_{\delta}^{\theta}\inf_{a\in A(x)}\biggl[\frac{1}{s}\int_{S}q^{(n)}(dy|x,a)u(s,y)+c_n(x,a)u(s,x)\biggr]ds,
		\end{align*}
		where $u\in B_{1}([0,1]\times S)$ and $(\theta,x)\in [\delta,1]\times S$.	
		By using the Assumption \ref{assm 2.1} and the fact that $c_n$ is bounded, we obtain
		\interdisplaylinepenalty=0
		\begin{align*}
		&\sup_{\theta\in [\delta,1]}\sup_{x\in S}|Tu(\theta,x)|\\
		&\leq e^{{\delta n}/{\alpha}}+\frac{1}{\alpha}\int_{\delta}^{1}\sup_{a\in A(x)}\biggl\{\frac{1}{s}\sup_{x\in S}\biggl[\int_{S}|q^{(n)}(dy|x,a)||u(s,y)|\biggr]+n\sup_{x\in S}|u(s,x)|\biggr\}ds\\
		&\leq e^{{\delta n}/{\alpha}}+\frac{\|u\|_1^\infty}{\alpha}\biggl\{\int_{\delta}^{1}\sup_{a\in A(x)}\frac{1}{s}\sup_{x\in S}\biggl(2q^{(n)}_x(a)\biggr)ds+n(1-\delta)\biggr\}\\
		&\leq e^{{\delta n}/{\alpha}}+\frac{1}{\alpha}\biggl[(-2)\overline{q}^{(n)}\log{\delta}+n(1-\delta)\biggr]{\|u\|_1^\infty}.
		\end{align*}
		Therefore, $T$ is a nonlinear operator from $B_{1}([0,1]\times S)$ to $B_{1}([0,1]\times S)$. For any $g_1,g_2\in B_{1}([0,1]\times S)$ and $\theta\in [\delta,1]$, we have
		\interdisplaylinepenalty=0
		\begin{align}
		\sup_{x\in S}|Tg_1(t,x)-Tg_2(t,x)|
		&\leq \frac{1}{\alpha}\int_{\delta}^{t}\biggl(2\overline{q}^{(n)}/s+n\biggr)\sup_{x\in S}|g_1(s,x)-g_2(s,x)|ds\nonumber\\
		&\leq \frac{1}{\alpha}\biggl[2\overline{q}^{(n)}(\log t-\log \delta)+n(t-\delta)\biggr]\|g_1-g_2\|^\infty_1.\label{eq 4.5}
		\end{align}
	Now, we prove the following:
		\begin{equation}
		\sup_{x\in S}|T^lg_1(t,x)-T^lg_2(t,x)|\leq \frac{\|g_1-g_2\|^\infty_1}{\alpha^l\cdot  l!}\biggl[2\overline{q}^{(n)}(\log t-\log \delta)+n(t-\delta)\biggr]^l~~\forall~l\geq1.\label{eq 4.6}
		\end{equation}
		By (\ref{eq 4.5}) and (\ref{eq 4.6}) we have
		\interdisplaylinepenalty=0
		\begin{align*}
		&\sup_{x\in S}|T^{l+1}g_1(t,x)-T^{l+1}g_2(t,x)|\\
		&\leq \frac{1}{\alpha}\int_{\delta}^{t}\biggl(2\overline{q}^{(n)}/s+n\biggr)\sup_{x\in S}|T^lg_1(s,x)-T^lg_2(s,x)|ds\\
		&	\leq  \frac{\|g_1-g_2\|^\infty_1}{\alpha^{l+1}\cdot  l!}\int_{\delta}^{t}\biggl(2\overline{q}^{(n)}/s+n\biggr)\biggl[2\overline{q}^{(n)}(\log s-\log \delta)+n(s-\delta)\biggr]^l ds\\
		&=\frac{\|g_1-g_2\|^\infty_1}{\alpha^{l+1} \cdot (l+1)!}\biggl[2\overline{q}^{(n)}(\log t-\log \delta)+n(t-\delta)\biggr]^{l+1}.
		\end{align*}
		Since $\sum_{k\geq 1}\frac{1}{\alpha^k \cdot k!}\biggl[-2\overline{q}^{(n)}\log \delta+n(1-\delta)\biggr]^k<\infty$, there exists some $m$ such that 
		$\beta:=\frac{1}{\alpha^m \cdot m!}\biggl[-2\overline{q}^{(n)}\log \delta+n(1-\delta)\biggr]^m<1,$
		which implies that $\|T^m g_1-T^m g_2\|_1^\infty\leq 
		\beta \|g_1-g_2\|^\infty_1$. Therefore, T is a $m$-step contraction operator on $B_{1}([0,1]\times S)$. So, by Banach fixed point theorem, there exists a unique bounded function $\varphi_\alpha^{(n,\delta)}\in B_{1}([0,1]\times S)$ (depending on $(n,\delta)$) such that $T\varphi^{(n,\delta)}_\alpha=\varphi^{(n,\delta)}_\alpha$; that is,
		\interdisplaylinepenalty=0
		\begin{align*}
		\varphi^{(n,\delta)}_\alpha(\theta,x)
		=e^{{\delta n}/{\alpha}}+\frac{1}{\alpha}\int_{\delta}^{\theta}\inf_{a\in A(x)}\biggl[\frac{1}{s}\int_{S}q^{(n)}(dy|x,a)\varphi^{(n,\delta)}_\alpha(s,y)+c_n(x,a)\varphi^{(n,\delta)}_\alpha(s,x)\biggr]ds.
		\end{align*}
		Also note that $\varphi^{(n,\delta)}_\alpha(\delta,x)=e^{{\delta n}/{\alpha}}$.
		Hence by using (\ref{eq 4.1}), (\ref{eq 4.2}) and the above equation, we have   $\varphi^{(n,\delta)}_\alpha\in  B^1_{V_0,V_1}([0,1]\times S)$ and it satisfies equation (\ref{eq 4.3}).

(2) We see that
\begin{align*}
\biggl[\theta c_n(x,a)\varphi^{(n,\delta)}_\alpha(\theta,x)+\int_{S}q^{(n)}(dy|x,a)\varphi^{(n,\delta)}_\alpha(\theta,y)\biggr] 
\end{align*}
is continuous in $a\in A(x)$ and $A(x)$ is compact. So by measurable selection theorem, [\cite{BS},Proposition 7.33], there exists a measurable function $f_\delta^{*}:[0,1]\times S\rightarrow A$ such that 
\begin{align}
&\inf_{a\in A(x)}\biggl[\theta c_n(x,a)\varphi^{(n,\delta)}_\alpha(\theta,x)+\int_{S}q^{(n)}(dy|x,a)\varphi^{(n,\delta)}_\alpha(\theta,y)\biggr]\nonumber\\
&=\biggl[\theta c_n(x,f_\delta^{*}(\theta,x))\varphi^{(n,\delta)}_\alpha(\theta,x)+\int_{S}q^{(n)}(dy|x,f_\delta^{*}(\theta,x))\varphi^{(n,\delta)}_\alpha(\theta,y)\biggr].\label{eq 4.7}
\end{align}
Let
\begin{equation*}
\pi_\delta^{*}:\mathbb{R}_+\times S \to P(A)  \
\end{equation*}
be defined by
	\begin{align*}
&\pi_\delta^{*}(\cdot|t,x)=I_{\{\hat{f}_\delta^{*}(t,x)\}}(\cdot),~\text{where}~\hat{f}_\delta^{*}:[0,1]\times S\rightarrow A,\\&\quad~\text{be a measurable mapping, defined by}~ \hat{f}_\delta^{*}(t,x):=f_\delta^{*}(\theta e^{-\alpha t},x).
\end{align*}
  Let $\theta(t):=\theta e^{-\alpha t}$ for $t\in [0,\infty)$. Since $c_n$ and $\varphi_\alpha^{(n,\delta,k)}$ are bounded, by Dynkin's formula we get
	 \interdisplaylinepenalty=0
	\begin{align}
	&E^{\pi}_x\biggl[exp\biggl(\int_{0}^{T_\delta(\theta)}\int_{A}\theta(s)c_n(\xi^{(n)}_s,a)\pi(da|\omega,s)ds\biggr)\varphi^{(n,\delta)}_\alpha\biggl(\theta(T_\delta),\xi_{T_\delta}^{(n)}\biggr)\biggr]-\varphi^{(n,\delta)}_\alpha(\theta,x)\nonumber\\
	&=E^{\pi}_x\biggl\{\int_{0}^{T_\delta(\theta)}\biggl[-\alpha \theta(s)\frac{\partial\varphi^{(n,\delta)}_\alpha}{\partial\theta}(\theta(s),\xi^{(n)}_s)
	+\int_{S}\int_{A}q^{(n)}(dy|\xi^{(n)}_s,a)\varphi^{(n,\delta)}_\alpha(\theta(s),y)\pi(da|\omega,s)\nonumber\\
	&+\theta(s)\int_{A}c_n(\xi^{(n)}_s,a)\varphi_\alpha^{(n,\delta)}(\theta(s),\xi^{(n)}_s)\pi(da|\omega,s)\biggr]\nonumber\\
	&exp\biggl(\int_{0}^{s}\int_{A}\theta(v)c_n(\xi_v^{(n)},a)\pi(da|\omega,v)dv\biggr)ds\biggr\}.\label{eq 4.8}
		\end{align}
By using (\ref{eq 4.3}) and (\ref{eq 4.8}), we obtain
\interdisplaylinepenalty=0 
	\begin{align*}
	& E^{\pi}_x\biggl[exp\biggl(\int_{0}^{T_\delta(\theta)}\int_{A}\theta(s)c_n(\xi^{(n)}_s,a)\pi(da|\omega,s)ds\biggr)\varphi^{(n,\delta)}_\alpha\biggl(\theta(T_\delta),\xi_{T_\delta}^{(n)}\biggr)\biggr]\geq \varphi^{(n,\delta)}_\alpha(\theta,x).
	\end{align*}
		Since $\pi\in \Pi$ is an arbitrary control and $\varphi_\alpha^{(n,\delta)}(\theta(T_\delta(\theta)),\xi^{(n)}_{T_\delta})=e^{{n \delta}/{\alpha}}$, we have
		\interdisplaylinepenalty=0
		\begin{align}
	 &\varphi^{(n,\delta)}_\alpha(\theta,x)\leq \inf_{\pi\in\Pi}E^{\pi}_x\biggl[e^{{n \delta}/{\alpha}}exp\biggl(\int_{0}^{T_\delta(\theta)}\int_{A}\theta(s)c_n(\xi^{(n)}_s,a)\pi(da|\omega,s)ds\biggr)\biggr]. \label{eq 4.9}
		\end{align}
Using equations (\ref{eq 4.3}), (\ref{eq 4.7}) and (\ref{eq 4.8}), we can show that
\interdisplaylinepenalty=0
	\begin{align*}
	&\varphi^{(n,\delta)}_\alpha(\theta,x)=E^{\pi_\delta^{*}}_x\biggl[e^{{n \delta}/{\alpha}}exp\biggl(\int_{0}^{T_\delta(\theta)}\int_{A}\theta(s)c_n(\xi^{(n)}_s,a)\pi_\delta^{*}(da|s,\xi^{(n)}_s)ds\biggr)\biggr].
	\end{align*}
Therefore	
\begin{align}
&\varphi^{(n,\delta)}_\alpha(\theta,x) \geq \inf_{\pi\in \Pi} E^{\pi}_x\biggl[e^{{n \delta}/{\alpha}}exp\biggl(\int_{0}^{T_\delta(\theta)}\int_{A}\theta(s)c_n(\xi^{(n)}_s,a)\pi(da|\omega,s)ds\biggr)\biggr].\label{eq 4.10}
\end{align}	
Therefore, from (\ref{eq 4.9}) and (\ref{eq 4.10}), we obtain (\ref{eq 4.4}). This completes the proof.
		\end{proof}
\begin{thm}\label{theo 4.1}
Suppose Assumptions \ref{assm 2.1}, \ref{assm 2.2} and \ref{assm 3.1} hold. Then the HJB equation (\ref{eq 3.1}) has a unique solution $\varphi_\alpha\in B^1_{V_0,V_1}([0,1]\times S)$ satisfying  
$1\leq \varphi_\alpha(\theta,x)\leq {\frac{\alpha^2 e^{{\theta L_0}/{\alpha}}}{\alpha^2-\rho_0\rho_1\theta}} (V(x))^{\frac{\rho_1\theta}{\alpha}}$ for all $(\theta,x)\in [0,1]\times S.$
	\end{thm}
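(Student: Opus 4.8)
The plan is to obtain the solution of the HJB equation (\ref{eq 3.1}) as a double limit of the truncated solutions $\varphi^{(n,\delta)}_\alpha$ constructed in Lemma \ref{lemm 4.1}: first let $\delta\downarrow 0$ for fixed $n$ to remove the artificial terminal condition, then let $n\to\infty$ to remove the truncation of the rates and costs. Throughout, the $B^1_{V_0,V_1}$-bounds furnished by Proposition \ref{prop 2.1}, together with Assumption \ref{assm 2.2}, must be shown to pass to the limit, and one must check that the limiting object lies in $B^1_{V_0,V_1}([0,1]\times S)$, solves (\ref{eq 3.1}) for a.e.\ $\theta$, and satisfies the stated bounds. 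Uniqueness will follow from Theorem \ref{theo 3.1}, which already identifies any solution in $B^1_{V_0,V_1}$ with the value function $\tilde J^*_\alpha$.

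First I would handle the limit $\delta\downarrow 0$. For fixed $n$, note $S_n$ is fixed and the truncated rates $q^{(n)}$ satisfy the trivial Lyapunov bound with $V\equiv 1$, while $c_n$ is bounded by $n$; moreover $c_n\le \rho_1\ln V_0+L_0$ and $q^{(n)}(\cdot|x,a)=q(\cdot|x,a)$ on $S_n$ satisfies Assumption \ref{assm 2.1}(i). From the stochastic representation (\ref{eq 4.4}) one reads off that, as $\delta\downarrow 0$, the prefactor $e^{n\delta/\alpha}\to 1$ and the horizon $T_\delta(\theta)=\alpha^{-1}\log(\theta/\delta)\uparrow\infty$, so $\varphi^{(n,\delta)}_\alpha(\theta,x)$ is monotone (nondecreasing as $\delta$ decreases, by a comparison argument on the exponential functional and the fact that $e^{n\delta/\alpha}$ dominates the contribution from the cost on $S\setminus S_n$, which vanishes) and bounded above by the uniform bound coming from Proposition \ref{prop 2.1}(c) applied to the model with $(c_n,q^{(n)})$ — here one uses that $(c_n,q^{(n)})$ still satisfies Assumption \ref{assm 2.1} with the \emph{same} constants, hence the bound ${\alpha^2 e^{\theta L_0/\alpha}}(\alpha^2-\rho_0\rho_1\theta)^{-1}(V_0(x))^{\rho_1\theta/\alpha}$. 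Define $\varphi^{(n)}_\alpha:=\lim_{\delta\downarrow 0}\varphi^{(n,\delta)}_\alpha$. Using the integral form of (\ref{eq 4.3}), dominated convergence (the integrands are controlled uniformly in $\delta$ via Assumption \ref{assm 2.2} and the estimate (\ref{eq 3.6})–(\ref{eq 3.7}) type bounds restricted to the now-trivial Lyapunov structure), and continuity of the infimum in $a$ (Assumption \ref{assm 3.1}), one shows $\varphi^{(n)}_\alpha\in B^1_{V_0,V_1}$, it lies in $[1,\ \alpha^2 e^{\theta L_0/\alpha}(\alpha^2-\rho_0\rho_1\theta)^{-1}(V_0(x))^{\rho_1\theta/\alpha}]$, and it solves
\begin{align}
\alpha\theta\frac{\partial\varphi^{(n)}_\alpha}{\partial\theta}(\theta,x)
=\inf_{a\in A(x)}\biggl[\theta c_n(x,a)\varphi^{(n)}_\alpha(\theta,x)+\int_S q^{(n)}(dy|x,a)\varphi^{(n)}_\alpha(\theta,y)\biggr],\qquad \varphi^{(n)}_\alpha(0,x)=1,\label{eq-plan-n}
\end{align}
for each $x$ and a.e.\ $\theta\in(0,1]$, together with the stochastic representation $\varphi^{(n)}_\alpha(\theta,x)=\inf_{\pi\in\Pi}E^\pi_x[\exp(\theta\int_0^\infty\int_A e^{-\alpha t}c_n(\xi^{(n)}_t,a)\pi(da|\omega,t)dt)]$.

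Next I would pass to the limit $n\to\infty$. Since $c_n\uparrow c$ pointwise (on any $S_n$, eventually $c_n=c\wedge(\rho_1\ln V_0+L_0)$, and the min stabilises to $\rho_1\ln V_0+L_0\wedge c=$ the relevant bound; in fact for $n$ large the truncation by $n$ is inactive because $\rho_1\ln V_0(x)+L_0\le \rho_1\ln n+L_0$) and $q^{(n)}\to q$ in the sense that for $x\in S_n$ the two agree, the sequence $\varphi^{(n)}_\alpha$ is monotone nondecreasing in $n$ (from the stochastic representation, since the admissible cost functionals increase and the process $\xi^{(n)}$ agrees with $\xi$ up to the exit time of $S_n$, which tends to $T_\infty=\infty$ by Proposition \ref{prop 2.1}(a)). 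It is bounded above, \emph{uniformly in} $n$, by $\alpha^2 e^{\theta L_0/\alpha}(\alpha^2-\rho_0\rho_1\theta)^{-1}(V_0(x))^{\rho_1\theta/\alpha}$, again because $(c_n,q^{(n)})$ inherits Assumption \ref{assm 2.1} with constants independent of $n$. Set $\varphi_\alpha:=\lim_{n\to\infty}\varphi^{(n)}_\alpha$. To show $\varphi_\alpha$ solves (\ref{eq 3.1}) I would rewrite (\ref{eq-plan-n}) in integral form, $\alpha\theta\,\partial_\theta\varphi^{(n)}_\alpha$ replaced by integration against $ds/s$, and pass to the limit under the integral sign using: (i) the uniform $V_1$-type bounds (\ref{eq 3.7}) combined with the integrability estimate (\ref{eq 3.6})–(\ref{eq 3.8}) from Assumption \ref{assm 2.2}, which provide the dominating function; (ii) Fatou/monotone convergence for the $\int_S q(dy|x,a)\varphi^{(n)}_\alpha(\theta,y)$ term, using that $q(\cdot|x,a)$ restricted to the complement of $\{x\}$ is a genuine (sub)probability-type kernel times $q_x(a)$; and (iii) continuity-compactness (Assumption \ref{assm 3.1}) to exchange the limit with the infimum over $a$. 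One then verifies $\varphi_\alpha\in B^1_{V_0,V_1}$ — the $V_0$-bound on $\varphi_\alpha$ is immediate from the uniform upper bound, and the $V_1$-bound on $\partial_\theta\varphi_\alpha$ comes from rearranging the HJB equation itself and invoking Assumption \ref{assm 2.2}(i)–(ii) exactly as in (\ref{eq 3.7}). Finally, uniqueness: if $\psi_\alpha\in B^1_{V_0,V_1}$ is any solution of (\ref{eq 3.1}) with the stated bounds, Theorem \ref{theo 3.1} gives $\psi_\alpha=\tilde J^*_\alpha=\varphi_\alpha$.

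The main obstacle is the $n\to\infty$ limit inside the nonlinear term $\inf_a[\cdots]$ and, in particular, justifying the interchange $\lim_n\inf_a=\inf_a\lim_n$ together with the passage to the limit in $\int_S q^{(n)}(dy|x,a)\varphi^{(n)}_\alpha(\theta,y)$ when $q$ is a \emph{signed} kernel with an unbounded jump rate $q_x(a)\le M_0V_0(x)$. The resolution is to split $\int_S q(dy|x,a)\varphi_\alpha(\theta,y)=\int_{S\setminus\{x\}}q(dy|x,a)\varphi_\alpha(\theta,y)-q_x(a)\varphi_\alpha(\theta,x)$, control the first integral by $V_1$-integrability (Assumption \ref{assm 2.2}(i) gives $\int q(dy|x,a)V_1^2(y)\le\rho_2V_1^2(x)+b_1$, hence $\int_{S\setminus\{x\}}q(dy|x,a)V_1(y)<\infty$ uniformly), and use Assumption \ref{assm 3.1}(ii)–(iii) for the continuity in $a$ needed to keep the infimum attained and measurable in $(\theta,x)$ via the measurable selection theorem [\cite{BS}, Proposition 7.33]; the monotone convergence $\varphi^{(n)}_\alpha\uparrow\varphi_\alpha$ then upgrades pointwise convergence of the bracket to the required convergence of its infimum over the compact set $A(x)$.
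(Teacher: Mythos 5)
Your overall architecture (truncate, let $\delta\downarrow 0$, then $n\to\infty$, with uniform bounds from Proposition \ref{prop 2.1} and uniqueness via Theorem \ref{theo 3.1}) is the same as the paper's, but three of your key steps are asserted with justifications that do not hold as stated. First, the monotonicity in $\delta$ goes the other way: from the representation (\ref{eq 4.4}), for a fixed path the cost accrued on the horizon extension $[T_{\delta+\varepsilon}(\theta),T_\delta(\theta)]$ is at most $n\theta\int_{T_{\delta+\varepsilon}}^{T_\delta}e^{-\alpha t}dt=n\varepsilon/\alpha$, which is exactly compensated by the terminal factor $e^{n\delta/\alpha}$; hence $\varphi^{(n,\delta)}_\alpha$ is \emph{increasing} in $\delta$, i.e.\ decreasing as $\delta\downarrow 0$ (the paper's Cases 1--3), not nondecreasing as you claim, and your heuristic about the cost on $S\setminus S_n$ is beside the point since $c_n$ vanishes there. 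This slip is not fatal (a monotone bounded family converges either way), but it signals that the comparison argument you invoke has not actually been carried out. Second, monotonicity in $n$ cannot be read off the stochastic representation alone: when $n$ increases, both the cost $c_n$ \emph{and the dynamics} $q^{(n)}$ change, so the two infima are taken over expectations of different processes and no pathwise comparison is available without a coupling construction you do not supply. The paper instead proves $\partial\varphi^{(n)}_\alpha/\partial\theta\ge 0$ (equation (\ref{eq 4.25})), shows $\varphi^{(n)}_\alpha$ is a supersolution for the level-$(n-1)$ data ((\ref{eq 4.26})--(\ref{eq 4.27 })), and uses Feynman--Kac with the selector $\pi^*_n$ to get (\ref{eq 4.28})--(\ref{eq 4.29}); some such argument is needed.

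Third, and most seriously, your mechanism for interchanging $\lim_n$ with $\inf_{a\in A(x)}$ is flawed: the bracket $\theta c_n(x,a)\varphi^{(n)}_\alpha(\theta,x)+\int_S q^{(n)}(dy|x,a)\varphi^{(n)}_\alpha(\theta,y)$ is \emph{not} monotone in $n$, because the signed kernel contributes the term $-q_x(a)\varphi^{(n)}_\alpha(\theta,x)$, which decreases as $\varphi^{(n)}_\alpha$ increases; so "monotone convergence of the bracket upgrades to convergence of its infimum" does not apply. The paper resolves precisely this point by passing to a weak formulation against test functions $\psi\in C^\infty_c(0,1)$, renormalizing with $\tau(x)=M_0V_0(x)$ to obtain the genuine stochastic kernel $Q^{(n)}(dy|x,a)=\delta_x(dy)+q^{(n)}(dy|x,a)/\tau(x)$, extracting convergent minimizers $a^*_m\to a^*$ from the compact set $A(x)$, and invoking the extended dominated convergence result (Lemma 8.3.7 of \cite{HL2}) to prove the two inequalities (\ref{eq 4.17}) and (\ref{eq 4.18}) separately; your sketch replaces all of this with an appeal to "continuity-compactness", which is where the real work lies. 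Relatedly, you never establish that the limit functions are a.e.\ differentiable in $\theta$ (needed for membership in $B^1_{V_0,V_1}$ and for the HJB equation to hold a.e.): the paper gets this from the equi-Lipschitz estimates (\ref{eq 4.12}) and (\ref{eq 4.22}), while your alternative route through the integral form of (\ref{eq 4.3}) would have to control the factor $1/s$ near $s=0$, which again requires a quantitative (Lipschitz-type) estimate you have not provided.
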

 \begin{proof}
 	First note that, $\varphi^{(n,\delta)}_\alpha$ is the solution to the equation (\ref{eq 4.3}), which depends on two parameters $n$, $\delta$. We prove this theorem in two steps. 

 	 Step 1.  In the first step, we construct a solution  $\varphi^{(n)}_\alpha(\cdot,x)$ from $\varphi^{(n,\delta)}_\alpha(\cdot,x)$  by passing the limit as $\delta \rightarrow 0$, such that $\varphi^{(n)}_\alpha(\cdot,x)$ is absolutely continuous function and satisfies the following DEs:
 	 \interdisplaylinepenalty=0
 	 	\begin{align}
 	 \left\{ \begin{array}{lllll}\alpha\theta\frac{\partial\varphi^{(n)}_\alpha}{\partial \theta}(\theta,x)
 	 &=\displaystyle{\inf_{a\in A(x)}}\biggl[\int_{S}q^{(n)}(dy|x,a)\varphi_\alpha^{(n)}(\theta,y)+\theta c_n(x,a)\varphi^{(n)}_\alpha(\theta,x)\biggr],~x\in S,~\text{a.e.},~\theta\in [0,1],\\
 	 1\leq \varphi^{(n)}_\alpha(\theta,x)&\leq {\frac{\alpha^2 e^{{\theta L_0}/{\alpha}}}{\alpha^2-\rho_0\rho_1\theta}}(V_0(x))^{\frac{\rho_1\theta}{\alpha}}~~~ \forall~(\theta,x)\in [0,1]\times S.\label{eq 4.11}
 	 \end{array}\right.
 	 \end{align}	  
 	  Given $0<\delta<1$ and $1\leq n<\infty$  by (\ref{eq 4.4}) and $\sup_{(x,a)\in K}c_n(x,a)\leq n$, we have 
 	  \interdisplaylinepenalty=0
 	 $\varphi^{(n,\delta)}_\alpha(\theta,x)\leq e^{2n/\alpha},~~x\in S,\theta\in [\delta,1]$.
 	 
 	 Next, we extend the domain of $\varphi^{(n,\delta)}_\alpha$ to $[0,1]\times S$ by 	 
 	 $$\overline{\varphi}^{(n,\delta)}_{\alpha}(\theta,x)=\left\{ \begin{array}{lcl}{\varphi}^{(n,\delta)}_{\alpha}(\theta,x) ,  &\delta\leq \theta\leq 1~\forall x\in S\\ e^{{n\delta}/{\alpha}},  & 0\leq \theta<\delta~\forall x\in S.
 	 \end{array}\right.$$
 We consider the following expression, for any given $\pi\in \Pi$, $x\in S$, $\theta,\theta_0\in [\delta,1]$:
 \interdisplaylinepenalty=0
 	 \begin{align*}
 		 \biggl|E^{\pi}_x &\biggl[e^{n\delta/\alpha}exp\biggl(\theta\int_{0}^{T_\delta(\theta)}\int_{A} e^{-\alpha t} c_n(\xi^{(n)}_t,a)\pi(da|\omega,t)dt\biggr)\biggr]\\
 	&-E^{\pi}_x \biggl[e^{n\delta/\alpha}exp\biggl(\theta_0\int_{0}^{T_\delta(\theta_0)}\int_{A}e^{-\alpha t} c_n(\xi^{(n)}_t,a)\pi(da|\omega,t)dt\biggr)\biggr]\biggr|\\
 	&\leq P_1+P_2,
 	 \end{align*}
   where
   \interdisplaylinepenalty=0
 	 \begin{align*}
  P_1:=&\biggl|E^{\pi}_x \biggl[e^{n\delta/\alpha}exp\biggl(\theta\int_{0}^{T_\delta(\theta)}\int_{A} e^{-\alpha t} c_n(\xi^{(n)}_t,a)\pi(da|\omega,t)dt\biggr)\biggr]\\
 &-E^{\pi}_x \biggl[e^{n\delta/\alpha}exp\biggl(\theta_0\int_{0}^{T_\delta(\theta)}\int_{A} e^{-\alpha t} c_n(\xi^{(n)}_t,a)\pi(da|\omega,t)dt\biggr)\biggr]\biggr|,
 	 \end{align*}
 	 and 
 	 \interdisplaylinepenalty=0
 	 \begin{align*}
 	 P_2:=&\biggl|E^{\pi}_x \biggl[e^{n\delta/\alpha}exp\biggl(\theta_0\int_{0}^{T_\delta(\theta)}\int_{A} e^{-\alpha t} c_n(\xi^{(n)}_t,a)\pi(da|\omega,t)dt\biggr)\biggr]\\
 	&-E^{\pi}_x \biggl[e^{n\delta/\alpha}exp\biggl(\theta_0\int_{0}^{T_\delta(\theta_0)}\int_{A} e^{-\alpha t} c_n(\xi^{(n)}_t,a)\pi(da|\omega,t)dt\biggr)\biggr]\biggr|.
 	 \end{align*}
Fix $n\geq 1$; we have
 	\begin{align*}
  	\int_{0}^{T_\delta(\theta)}\int_{A} e^{-\alpha t} c_n(\xi^{(n)}_t,a)\pi(da|\omega,t)dt\leq n\int_{0}^{T_\delta(\theta)}e^{-\alpha t}dt\leq \frac{n}{\alpha} 
 	\end{align*}
 	 and 
 	 \interdisplaylinepenalty=0
 \begin{align*}
 \int_{T_\delta(\theta\wedge\theta_0)}^{T_\delta(\theta\vee\theta_0)}&\int_{A} e^{-\alpha t} c_n(\xi^{(n)}_t,a)\pi(da|\omega,t)dt
\leq n \int_{T_\delta(\theta\wedge\theta_0)}^{T_\delta(\theta\vee\theta_0)}e^{-\alpha t}dt\\
&=\frac{n}{\alpha}[exp(-\alpha T_\delta(\theta\wedge \theta_0))-exp(-\alpha T_\delta(\theta\vee \theta_0))]\leq \frac{\delta n |\theta_0-\theta|}{\alpha \theta \theta_0},
 \end{align*}
where $c\wedge d:=min\{c,d\}$~~and $c\vee d:=max\{c,d\}$.	
 Hence, we obtain
 \interdisplaylinepenalty=0
 \begin{align*}
 P_1&=e^{n\delta/\alpha}E^{\pi}_x \biggl[exp\biggl((\theta\wedge\theta_0)\int_{0}^{T_\delta(\theta)}\int_{A} e^{-\alpha t} c_n(\xi^{(n)}_t,a)\pi(da|\omega,t)dt\biggr)\\
&~~~~\times
\biggl(exp\biggl(|\theta_0-\theta|\int_{0}^{T_\delta(\theta)}\int_{A} e^{-\alpha t} c_n(\xi^{(n)}_t,a)\pi(da|\omega,t)dt\biggr)-1\biggr)\biggr]\\
&\leq e^{2 n/\alpha}E^{\pi}_x\biggl[exp\biggl(|\theta_0-\theta|\int_{0}^{T_\delta(\theta)}\int_{A} e^{-\alpha t} c_n(\xi^{(n)}_t,a)\pi(da|\omega,t)dt\biggr)-1\biggr]\\
&\leq e^{2 n/\alpha} \Big(exp\Big(\frac{n}{\alpha}|\theta_0-\theta|\Big)-1\Big)\\
&\leq e^{2 n/\alpha}\Big(e^{n/\alpha}-1\Big)|\theta_0-\theta|.
 \end{align*}
 Here, the last inequality follows from the fact that $e^{bz}-1\leq (e^b-1)z$ for all $z\in [0,1]$ and $b>0$. 	 
 	 Similarly for $P_2$ we have
 	 \interdisplaylinepenalty=0
 	 \begin{align*}
 P_2&=e^{n\delta/\alpha}E^{\pi}_x \biggl[exp\biggl(\theta_0\int_{0}^{T_\delta(\theta\wedge\theta_0)}\int_{A} e^{-\alpha t} c_n(\xi^{(n)}_t,a)\pi(da|\omega,t)dt\biggr)\\
  &~~~~\times\biggl(exp\biggl(\theta_0\int_{T_\delta(\theta\wedge\theta_0)}^{T_\delta(\theta\vee\theta_0)}\int_{A} e^{-\alpha t} c_n(\xi^{(n)}_t,a)\pi(da|\omega,t)dt\biggr)-1\biggr)\biggr]\\
&\leq e^{2 n/\alpha}E^{\pi}_i \biggl[exp\biggl(\theta_0\int_{T_\delta(\theta\wedge\theta_0)}^{T_\delta(\theta\vee\theta_0)}\int_{A} e^{-\alpha t} c_n(\xi^{(n)}_t,a)\pi(da|\omega,t)dt\biggr)-1\biggr]\\
&\leq e^{2 n/\alpha}\biggl(exp\biggl(\frac{n\delta|\theta-\theta_0|}{\alpha \theta}\biggr)-1\biggr)\\
&\leq e^{2 n/\alpha}\biggl(e^{n/\alpha}-1\biggr)|\theta_0-\theta|.
 	 \end{align*}
 		Hence for all $(\theta,x)\in [0,1]\times S$, we have
 	\begin{align}
 	|\overline{\varphi}^{(n,\delta)}_\alpha(\theta_0,x)-\overline{\varphi}^{(n,\delta)}_\alpha(\theta,x)|
 \leq 2e^{{2n}/{\alpha}}(e^{{n}/{\alpha}}-1)|\theta-\theta_0|.\label{eq 4.12}
 	\end{align}
 	Now we want to show that $\overline{\varphi}^{(n,\delta)}_\alpha$ is decreasing as $\delta\rightarrow 0$ for any $(\theta,x)$. For a fixed $\alpha>0$ and $\varepsilon>0$ small enough, consider
 	$\overline{\varphi}^{(n,\delta+\varepsilon)}_\alpha(\theta,x)-\overline{\varphi}^{(n,\delta)}_\alpha(\theta,x)$ and assume that $h_\delta:=e^{\frac{n\delta}{\alpha}}$. By measurable selection theorem we get the minimizer $\pi^{*}_{\delta+\varepsilon}$ like in equation (\ref{eq 4.7}), corresponding to $\overline{\varphi}_\alpha^{(n,\delta+\varepsilon)}$ such that the followings cases hold.\\
 	Case 1. If $\delta+\varepsilon<\theta$ then
 	\begin{align*}
&\overline{\varphi}^{(n,\delta+\varepsilon)}_\alpha(\theta,x)-\overline{\varphi}^{(n,\delta)}_\alpha(\theta,x)\\
&=E^{\pi^{*}_{\delta+\varepsilon}}_x\biggl[h_{\delta+\varepsilon} exp\biggl(\theta\int_{0}^{T_{\delta+\varepsilon}} e^{-\alpha t} c_n(\xi^{(n)}_t,\pi^{*}_{\delta+\varepsilon}(da|t,\xi^{(n)}_t))dt\biggr)\biggr]\\
&\quad-\inf_{\pi\in \Pi}E^{\pi}_x\biggl[h_{\delta} exp\biggl(\theta\int_{0}^{T_\delta} e^{-\alpha t} c_n(\xi^{(n)}_t,\pi(da|\omega,t))dt\biggr)\biggr]\\
&\geq
h_\delta E^{\pi^{*}_{\delta+\varepsilon}}_x\biggl[ exp\biggl(\theta\int_{0}^{T_{\delta+\varepsilon}} e^{-\alpha t} c_n(\xi^{(n)}_t,\pi^{*}_{\delta+\varepsilon}(da|t,\xi^{(n)}_t))dt\biggr)\\
&\quad\times\biggl\{h_\varepsilon-exp\biggl(\theta\int_{T_{\delta+\varepsilon}}^{T_\delta} e^{-\alpha t} c_n(\xi^{(n)}_t,\pi^{*}_{\delta+\varepsilon}(da|t,\xi^{(n)}_t))dt\biggr)\biggr\}\biggr]\\
&\geq h_\delta E^{\pi^{*}_{\delta+\varepsilon}}_x\biggl[ exp\biggl(\theta\int_{0}^{T_{\delta+\varepsilon}} e^{-\alpha t} c_n(\xi^{(n)}_t,\pi^{*}_{\delta+\varepsilon}(da|t,\xi^{(n)}_t))dt\biggr)\biggl\{h_\varepsilon-exp\biggl(\theta\int_{T_{\delta+\varepsilon}}^{T_\delta} e^{-\alpha t} ndt\biggr)\biggr\}\biggr]\\
&=h_\delta E^{\pi^{*}_{\delta+\varepsilon}}_x\biggl[ exp\biggl(\theta\int_{0}^{T_{\delta+\varepsilon}} e^{-\alpha t} c_n(\xi^{(n)}_t,\pi^{*}_{\delta+\varepsilon}(da|t,\xi^{(n)}_t))dt\biggr)\biggl\{h_\varepsilon-exp\biggl(\frac{n\theta(e^{-\alpha T_{\delta+\varepsilon}}-e^{-\alpha T_\delta})}{\alpha}\biggr)\biggr\}\biggr]\\
&=0.
 	\end{align*}
 	Case 2. $\delta<\theta\leq \delta+\varepsilon$ 
 	 	\begin{align*}
 	&\overline{\varphi}^{(n,\delta+\varepsilon)}_\alpha(\theta,x)-\overline{\varphi}^{(n,\delta)}_\alpha(\theta,x)\\
 	&=h_{\delta+\varepsilon}-E^{\pi^{*}_{\delta}}_x\biggl[h_{\delta} exp\biggl(\theta\int_{0}^{T_\delta} e^{-\alpha t} c_n(\xi^{(n)}_t,\pi^{*}_\delta(da|t,\xi^{(n)}_t))dt\biggr)\biggr]\\
 	&=h_\delta\biggl [h_\varepsilon-E^{\pi^{*}_{\delta}}_x\biggl[exp\biggl(\theta\int_{{0}}^{T_\delta} e^{-\alpha t} c_n(\xi^{(n)}_t,\pi^{*}_{\delta}(da|t,\xi^{(n)}_t))dt\biggr)\biggr]\biggr]\\
 		&\geq h_\delta\biggl [h_\varepsilon-exp\biggl(\theta\int_{{0}}^{T_\delta} e^{-\alpha t} ndt\biggr)\biggr]\\
 		&= h_\delta\biggl [h_\varepsilon-e^{n\theta \frac{(1-e^{-\alpha T_\delta})}{\alpha}}\biggr]\geq 0.
 	\end{align*}
 	Case 3. $\theta\leq \delta$
 	$$\overline{\varphi}^{(n,\delta+\varepsilon)}_\alpha(\theta,x)-\overline{\varphi}^{(n,\delta)}_\alpha(\theta,x)=h_{\delta+\varepsilon}-h_\delta=h_\delta(h_\varepsilon-1)=h_\delta(e^\frac{n\varepsilon}{\alpha}-1)\geq 0.$$
 		Hence $\overline{\varphi}_\alpha^{(n,\delta)}(\theta,x)$ is increasing in $\delta$ for any $(\theta,x)\in [0,1]\times S$. Now from (\ref{eq 4.12}), we know that for each $x\in S$, $\overline{\varphi}_\alpha^{(n,\delta)}(\cdot,x)$ is Lipschitz continous in $\theta\in [0, 1]$. Also, $\overline{\varphi}_\alpha^{(n,\delta)}(\theta,x)$ is increasing in $\delta$ for any $(\theta,x)\in [0,1]\times S$ and bounded above (since $\overline{\varphi}^{(n,\delta)}_\alpha(\theta,x)\leq e^{2n/\alpha},~~x\in S,\theta\in [\delta,1]$), therefore there exists a function $\varphi^{(n)}_\alpha$ on $[0,1]\times S$ that is continuous with respect to $\theta\in [0,1]$, such that along a subsequence $\delta_m\rightarrow 0$, we have $\lim_{m\rightarrow\infty}\overline{\varphi}_\alpha^{(n,\delta_m)}(\theta,x)=\varphi_\alpha^{(n)}(\theta,x)$ and for any fixed $x\in S$  this convergence is uniform in $\theta\in [0,1]$.
 
 		Let $\psi\in C^\infty_c(0,1)$, then we have
 		\begin{align}
 	&-\int_{0}^{1}\alpha\frac{d(\theta\psi)}{d\theta}(\theta)\overline{\varphi}_\alpha^{(n,\delta_m)}(\theta,x)d\theta=\int_{0}^{1}\alpha\theta\frac{\partial \overline{\varphi}^{(n,\delta_m)}_\alpha}{\partial \theta}(\theta,x)\psi(\theta)d\theta\nonumber\\
 	&=\int_{0}^{1}\displaystyle{\inf_{a\in A(x)}}\biggl[\theta c_n(x,a)\overline{\varphi}^{(n,\delta_m)}_\alpha(\theta,x)+\int_{S}q^{(n)}(dy|x,a)\overline{\varphi}^{(n,\delta_m)}_\alpha(\theta,y)\biggr]\psi(\theta)d\theta\nonumber\\
 	&-\int_{0}^{\delta_m}\inf_{a\in A(x)}\biggl[\theta c_n(x,a)\overline{\varphi}^{(n,\delta_m)}_\alpha(\theta,x)+\int_{S}q^{(n)}(dy|x,a)\overline{\varphi}^{(n,\delta_m)}_\alpha(\theta,y)\biggr]\psi(\theta)d\theta\nonumber\\
 		&=\int_{0}^{1}\displaystyle{\inf_{a\in  A(x)}\biggl[\theta c_n(x,a)\overline{\varphi}^{(n,\delta_m)}_\alpha(\theta,x)+\int_{S}q^{(n)}(dy|x,a)\overline{\varphi}^{(n,\delta_m)}_\alpha(\theta,y)\biggr]}\psi(\theta)d\theta\nonumber\\
 	&-\int_{0}^{\delta_m}\inf_{a\in A(x)}\biggl[\theta c_n(x,a)\overline{\varphi}^{(n,\delta_m)}_\alpha(\theta,x)\biggr]\psi(\theta)d\theta.\label{eq 4.13}
 		\end{align}
 		Now take $\tau(x):=M_0 V_0(x)$ and define $$Q^{(n)}(dy|x,a):=\delta_x(dy)+\frac{q^{(n)}(dy|x,a)}{\tau(x)}$$ for all $(x,a)\in K$ where $\delta_x(\cdot)$ is the Dirac measure concentrated at $x$. We see that under Assumption \ref{assm 2.1}, $Q^{(n)}$ is a stochatic kernel on $S$ given $K$.
 	Then (\ref{eq 4.13}) can be written as 
 		\begin{align}
 	&-\int_{0}^{1}\biggl\{\frac{\alpha}{\tau(x)}\frac{d(\theta\psi)}{d\theta}\overline{\varphi}_\alpha^{(n,\delta_m)}(\theta,x)-\overline{\varphi}_\alpha^{(n,\delta_m)}(\theta,x)\psi(\theta)\biggr\}d\theta\nonumber\\
 	&=\int_{0}^{1}\displaystyle{\inf_{a\in A(x)}}\biggl[\frac{\theta}{\tau(x)} c_n(x,a)\overline{\varphi}^{(n,\delta_m)}_\alpha(\theta,x)+\int_{S}Q^{(n)}(dy|x,a)\overline{\varphi}^{(n,\delta_m)}_\alpha(\theta,y)\biggr]\psi(\theta)d\theta\nonumber\\
 	&-\frac{1}{\tau(x)}\int_{0}^{\delta_m}\inf_{a\in A(x)}\biggl[\theta c_n(x,a)\overline{\varphi}^{(n,\delta_m)}_\alpha(\theta,x)\biggr]\psi(\theta)d\theta.\label{eq 4.14}
 	\end{align}
 	
 		Now 	\begin{align}
 	&\biggl|\displaystyle{\inf_{a\in  A(x)}\biggl[\frac{\theta}{\tau(x)} c_n(x,a)\overline{\varphi}^{(n,\delta_m)}_\alpha(\theta,x)+\int_{S}Q^{(n)}(dy|x,a)\overline{\varphi}^{(n,\delta_m)}_\alpha(\theta,y)\biggr]}\psi(\theta)\biggr|\nonumber\\
 		&\leq 	|\psi(\theta)|\displaystyle{\sup_{a\in A(x)}\biggl[\frac{\theta}{\tau(x)} |c_n(x,a)||\overline{\varphi}^{(n,\delta)}_\alpha(\theta,x)|+\int_{S}Q^{(n)}(dy|x,a)|\overline{\varphi}^{(n,\delta)}_\alpha(\theta,y)|\biggr]}\nonumber\\
 			&\leq {\frac{\alpha^2}{\alpha^2-\rho_0\rho_1\theta}} e^{{\theta L_0}/{\alpha}}\displaystyle\sup_{a\in A(x)}\biggl[\frac{\theta}{\tau(x)}nV_0^{\frac{\rho_1\theta}{\alpha}}(x)+\int_{S}Q(dy|x,a)V_0^{\frac{\rho_1\theta}{\alpha}}(y)\biggr]|\psi(\theta)|\nonumber\\
 			&\leq {\frac{\alpha^2}{\alpha^2-\rho_0\rho_1\theta}} e^{{\theta L_0}/{\alpha}}\displaystyle\sup_{a\in  A(x)}\biggl[\frac{\theta}{\tau(x)}nV_0(x)+\int_{S}Q(dy|x,a)V_0(y)\biggr]|\psi(\theta)|\nonumber\\
 				&\leq {\frac{\alpha^2}{\alpha^2-\rho_0\rho_1\theta}} e^{{\theta L_0}/{\alpha}}\displaystyle\biggl[\frac{\theta}{\tau(x)}nV_0(x)+V_0(x)+\rho_0\frac{V_0(x)}{\tau(x)}\biggr]|\psi(\theta)|\nonumber\\
 						&= {\frac{\alpha^2}{\alpha^2-\rho_0\rho_1\theta}} e^{{\theta L_0}/{\alpha}}\displaystyle\biggl[\frac{\theta}{\tau(x)}nV_0(x)+V_0(x)+\frac{\rho_0}{M_0}\biggr]|\psi(\theta)|
 				.\label{eq 4.15}
 			 \end{align} 		
Since for each fixed $x\in S$, $A(x)$ is compact, there exists a subsequence of $\{m\}$, by abuse of notation, we denote the same sequence and  $a^{*}\in A(x)$  such that $\lim_{m\rightarrow\infty}a^{*}_{m}=a^{*}$. 
Now, from (\ref{eq 4.14}), for any $a\in A(x)$, we have
 				\begin{align}
 			&-\int_{0}^{1}\biggl\{\frac{\alpha}{\tau(x)}\frac{d(\theta\psi)}{d\theta}\overline{\varphi}_\alpha^{(n,\delta_m)}(\theta,x)-\overline{\varphi}_\alpha^{(n,\delta_m)}(\theta,x)\psi(\theta)\biggr\}d\theta\nonumber\\
 			&= \int_{0}^{1}\displaystyle\biggl[\frac{\theta}{\tau(x)} c_n(x,a^{*}_m)\overline{\varphi}^{(n,\delta_m)}_\alpha(\theta,x)+\int_{S}Q^{(n)}(dy|x,a^{*}_m)\overline{\varphi}^{(n,\delta_m)}_\alpha(\theta,y)\biggr]\psi(\theta)d\theta\nonumber\\
 			&\quad-\frac{1}{\tau(x)}\int_{0}^{\delta_m}\inf_{a\in A(x)}\biggl[\theta c_n(x,a)\overline{\varphi}^{(n,\delta_m)}_\alpha(\theta,x)\biggr]\psi(\theta)d\theta.\label{eq 4.16}
 			\end{align}
 		So, by Lemma 8.3.7 in Hernandez-Lerma and Lassere (1999) \cite{HL2} taking limit as $m\rightarrow\infty$ in  (\ref{eq 4.16}), we get
 		\begin{align*}
 		&-\int_{0}^{1}\biggl\{\frac{\alpha}{\tau(x)}\frac{d(\theta\psi)}{d\theta}(\theta)\varphi_\alpha^{(n)}(\theta,x)-\varphi_\alpha^{(n)}(\theta,x)\psi(\theta)\biggr\}d\theta\nonumber\\
 		&\geq \int_{0}^{1}\displaystyle\biggl[\frac{\theta}{\tau(x)} c_n(x,a^{*})\varphi^{(n)}_\alpha(\theta,x)+\int_{S}Q^{(n)}(dy|x,a^{*})\varphi^{(n)}_\alpha(\theta,y)\biggr]\psi(\theta)d\theta.
 		\end{align*}
 		Hence
 			\begin{align}
 		&-\int_{0}^{1}\biggl\{\frac{\alpha}{\tau(x)}\frac{d(\theta\psi)}{d\theta}(\theta)\varphi_\alpha^{(n)}(\theta,x)-\varphi_\alpha^{(n)}(\theta,x)\psi(\theta)\biggr\}d\theta\nonumber\\
 		&\geq \inf_{a\in A(x)}\int_{0}^{1}\displaystyle\biggl[\frac{\theta}{\tau(x)} c_n(x,a)\varphi^{(n)}_\alpha(\theta,x)+\int_{S}Q^{(n)}(dy|x,a)\varphi^{(n)}_\alpha(\theta,y)\biggr]\psi(\theta)d\theta.\label{eq 4.17}
 		\end{align}
 		But
 			\begin{align*}
 		&-\int_{0}^{1}\biggl\{\frac{\alpha}{\tau(x)}\frac{d(\theta\psi)}{d\theta}\overline{\varphi}_\alpha^{(n,\delta_m)}(\theta,x)-\overline{\varphi}_\alpha^{(n,\delta_m)}(\theta,x)\psi(\theta)\biggr\}d\theta\nonumber\\
 		&\leq \int_{0}^{1}\displaystyle\biggl[\frac{\theta}{\tau(x)} c_n(x,a)\overline{\varphi}^{(n,\delta_m)}_\alpha(\theta,x)+\int_{S}Q^{(n)}(dy|x,a)\overline{\varphi}^{(n,\delta_m)}_\alpha(\theta,y)\biggr]\psi(\theta)d\theta\nonumber\\
 		&\quad-\frac{1}{\tau(x)}\int_{0}^{\delta_m}\inf_{a\in A(x)}\biggl[\theta c_n(x,a)\overline{\varphi}^{(n,\delta_m)}_\alpha(\theta,x)\biggr]\psi(\theta)d\theta.
 		\end{align*}
 		By similar arguments, we get
 			\begin{align}
 		&-\int_{0}^{1}\biggl\{\frac{\alpha}{\tau(x)}\frac{d(\theta\psi)}{d\theta}(\theta)\varphi_\alpha^{(n)}(\theta,x)-\varphi_\alpha^{(n)}(\theta,x)\psi(\theta)\biggr\}d\theta\nonumber\\
 		&\leq \inf_{a\in A(x)}\int_{0}^{1}\displaystyle\biggl[\frac{\theta}{\tau(x)} c_n(x,a)\varphi^{(n)}_\alpha(\theta,x)+\int_{S}Q^{(n)}(dy|x,a)\varphi^{(n)}_\alpha(\theta,y)\biggr]\psi(\theta)d\theta.\label{eq 4.18}
 		\end{align}
 		From (\ref{eq 4.17}) and (\ref{eq 4.18}), we get
 			\begin{align}
 		&-\int_{0}^{1}\biggl\{\frac{\alpha}{\tau(x)}\frac{d(\theta\psi)}{d\theta}(\theta)\varphi_\alpha^{(n)}(\theta,x)-\varphi_\alpha^{(n)}(\theta,x)\psi(\theta)\biggr\}d\theta\nonumber\\
 		&= \inf_{a\in A(x)}\int_{0}^{1}\displaystyle\biggl[\frac{\theta}{\tau(x)} c_n(x,a)\varphi^{(n)}_\alpha(\theta,x)+\int_{S}Q^{(n)}(dy|x,a)\varphi^{(n)}_\alpha(\theta,y)\biggr]\psi(\theta)d\theta.\label{eq 4.19}
 		\end{align}
 		Thus we obtain
 			\begin{align*}
 		&-\int_{0}^{1}\alpha\frac{d(\theta\psi)}{d\theta}(\theta)\varphi_\alpha^{(n)}(\theta,x)d\theta\nonumber\\
 		&= \inf_{a\in A(x)}\int_{0}^{1}\displaystyle\biggl[{\theta} c_n(x,a)\varphi^{(n)}_\alpha(\theta,x)+\int_{S}q^{(n)}(dy|x,a)\varphi^{(n)}_\alpha(\theta,y)\biggr]\psi(\theta)d\theta.
 		\end{align*}
 		Hence
 			\begin{align*}
 		\alpha\theta\frac{\partial\varphi^{(n)}_\alpha}{\partial\theta}(\theta,x)
 		&= \inf_{a\in A(x)}\displaystyle\biggl[ \theta c_n(x,a)\varphi^{(n)}_\alpha(\theta,x)+\int_{S}q^{(n)}(dy|x,a)\varphi^{(n)}_\alpha(\theta,y)\biggr]~\text{a.e.}~\theta\in [0,1]
 		\end{align*}
 		in the sense of distribution. When $\frac{\partial\varphi^{(n)}_{\alpha}}{\partial\theta}$ does not exist for some $(\theta,x)$, we define
 			\begin{align*}
 		\alpha\theta\frac{\partial\varphi^{(n)}_\alpha}{\partial\theta}(\theta,x)
 		&= \inf_{a\in A(x)}\displaystyle\biggl[ \theta c_n(x,a)\varphi^{(n)}_\alpha(\theta,x)+\int_{S}q^{(n)}(dy|x,a)\varphi^{(n)}_\alpha(\theta,y)\biggr].
 		\end{align*}
 Now for $\theta\in [\delta_m,1]$, by using (\ref{eq 4.4}) and Proposition \ref{prop 2.1}, we have
\begin{align}
&\varphi^{(n,\delta_m)}_\alpha(\theta,x)
=\inf_{\pi\in \Pi} E^{\pi}_x \biggl[e^{n\delta_m/\alpha}\exp\biggl(\theta\int_{0}^{T_{\delta_m}(\theta)}\int_{A}e^{-\alpha t} c_n(\xi^{(n)}_t,a)\pi(da|\omega,t)dt\biggr)\biggr]\nonumber\\
&\leq e^{n\delta_m/\alpha}\inf_{\pi\in \Pi}E^{\pi}_x \biggl[exp\biggl(\theta\int_{0}^{\infty}\int_{B}\int_{A}e^{-\alpha t} c_n(\xi^{(n)}_t,a)\pi(da|\omega,t)dt\biggr)\biggr]\nonumber\\
&\leq e^{n\delta_m/\alpha}\inf_{\pi\in \Pi}E^{\pi}_x \biggl[exp\biggl(\theta\int_{0}^{\infty}\int_{B}\int_{A}e^{-\alpha t} c(\xi^{(n)}_t,a)\pi(da|\omega,t)dt\biggr)\biggr]\nonumber\\
&\leq e^{n\delta_m/\alpha}{\frac{\alpha^2 e^{{\theta L_0}/{\alpha}}}{\alpha^2-\rho_0\rho_1\theta}} (V_0(x))^{\frac{\rho_1\theta}{\alpha}}\nonumber.
\end{align}
Note that $\varphi_\alpha^{(n,\delta_m)}\rightarrow \varphi_\alpha^{(n)}$ as $m\rightarrow\infty$. Thus, letting $m\rightarrow\infty$ in the above equation, we obtain
\interdisplaylinepenalty=0
	\begin{align}
1\leq \varphi^{(n)}_\alpha(\theta,x)
\leq {\frac{\alpha^2 e^{{\theta L_0}/{\alpha}}}{\alpha^2-\rho_0\rho_1\theta}} (V_0(x))^{\frac{\rho_1\theta}{\alpha}}.
\label{eq 4.20}
\end{align}
By using (\ref{eq 4.1}), (\ref{eq 4.2}), (\ref{eq 4.20}), and the PDE satisfied by $\varphi^{(n)}_{\alpha}$ (that is just proven), we say that $\varphi^{(n)}_{\alpha}\in B^1_{V_0,V_1}([0,1]\times S)$ and it is a solution of (\ref{eq 4.11}).
 Thus by closely mimicking the arguments as in Theorem \ref{theo 3.1}, one can easily get the stochastic representation of the solution $\varphi_\alpha^{(n)}$, that is
\interdisplaylinepenalty=0
\begin{align}
\varphi^{(n)}_\alpha(\theta,x)=\inf_{\pi\in \Pi}E^{\pi}_x \biggl[exp\biggl(\theta\int_{0}^{\infty}\int_{A}e^{-\alpha t} c_n(\xi^{(n)}_t,a)\pi(da|\omega,t)dt\biggr)\biggr].\label{eq 4.21}
\end{align}
Step 2: In this step we prove Theorem \ref{theo 4.1}, by passing to the limit as $n \to \infty$. 
Now we will prove that for each $x\in S$, $\{\varphi^{(n)}_\alpha\}_{n\geq 1}$ is equicontinuous on $[0, 1]$.  We consider the following expression, for any given $\pi\in \Pi$, $x\in S$, $\theta,\theta_0\in [0,1]$:
\interdisplaylinepenalty=0
\begin{align*}
\biggl|E^{\pi}_x&\biggl[exp\biggl(\theta\int_{0}^{\infty}\int_{A} e^{-\alpha t} c_n(\xi^{(n)}_t,a)\pi(da|\omega,t)dt\biggr)\biggr]\\
&-E^{\pi}_x \biggl[exp\biggl(\theta_0\int_{0}^{\infty}\int_{A}e^{-\alpha t} c_n(\xi^{(n)}_t,a)\pi(da|\omega,t)dt\biggr)\biggr]\biggr|\\
&\leq K_1,
\end{align*}
where
\interdisplaylinepenalty=0
\begin{align*}
K_1&=E^{\pi}_x \biggl[exp\biggl((\theta\wedge\theta_0)\int_{0}^{\infty}\int_{A} e^{-\alpha t} c_n(\xi^{(n)}_t,a)\pi(da|\omega,t)dt\biggr)\\
&~~~~\times
\biggl(exp\biggl(|\theta_0-\theta|\int_{0}^{\infty}\int_{A} e^{-\alpha t} c_n(\xi^{(n)}_t,a)\pi(da|\omega,t)dt\biggr)-1\biggr)\biggr]\\
&\leq E^{\pi}_x \biggl[exp\biggl((\theta\wedge\theta_0)\int_{0}^{\infty}\int_{A} e^{-\alpha t} c_n(\xi^{(n)}_t,a)\pi(da|\omega,t)dt\biggr)\\
&~~~~\times
\biggl(exp\biggl(\int_{0}^{\infty}\int_{A} e^{-\alpha t} c_n(\xi^{(n)}_t,a)\pi(da|\omega,t)dt \biggr) -1\biggr)|\theta_0-\theta|\biggr]\\
&\leq E^{\pi}_x \biggl[exp\biggl(\int_{0}^{\infty}\int_{A} e^{-\alpha t} c_n(\xi^{(n)}_t,a)\pi(da|\omega,t)dt\biggr)\\
&~~~~\times
\biggl(exp\biggl(\int_{0}^{\infty}\int_{A} e^{-\alpha t} c_n(\xi^{(n)}_t,a)\pi(da|\omega,t)dt\biggr)|\theta_0-\theta|\biggr)\biggr]\\
&= |\theta_0-\theta| \times E^{\pi}_x \biggl[exp\biggl(2\int_{0}^{\infty}\int_{A} e^{-\alpha t} c_n(\xi^{(n)}_t,a)\pi(da|\omega,t)dt\biggr)\biggr]\\
&\leq  |\theta_0-\theta| \times \frac{\alpha e^{{2L_0}/{\alpha}}}{\alpha-\rho_2}M_1^2\biggl(V_1^2(x)+\frac{b_1}{\rho_2}\biggr).
\end{align*}
 Here, the first inequality is according to $e^{bz}-1\leq (e^b-1)z$ for all $z\in [0,1]$ and $b>0$ and the last inequality  follows from (\ref{eq 3.6}). Therefore, we have
 \interdisplaylinepenalty=0
 \begin{align}
|\varphi^{(n)}_\alpha(\theta_0,x)-\varphi^{(n)}_\alpha(\theta,x)|\nonumber
&\leq \sup_{\pi\in \Pi} |\theta_0-\theta| \times \frac{\alpha e^{{2L_0}/{\alpha}}}{\alpha-\rho_2}M_1^2\biggl(V_1^2(x)+\frac{b_1}{\rho_2}\biggr)\nonumber\\
&=|\theta_0-\theta| \times \frac{\alpha e^{{2L_0}/{\alpha}}}{\alpha-\rho_2}M_1^2\biggl(V_1^2(x)+\frac{b_1}{\rho_2}\biggr).\label{eq 4.22}
\end{align}
By measurable selection theorem, [\cite{BS},Proposition 7.33], there exists a measurable function $f^{*}_n:[0,1]\times S\rightarrow A$ such that 
\begin{align}
&\inf_{a\in A(x)}\biggl[\theta c_n(x,a)\varphi_\alpha(\theta,x)+\int_{S}q^{(n)}(dy|x,a)\varphi_\alpha(\theta,y)\biggr]\nonumber\\
&=\biggl[\theta c_n(x,f^{*}_n(\theta,x))\varphi_\alpha(\theta,x)+\int_{S}q^{(n)}(dy|x,f^{*}_n(\theta,x))\varphi_\alpha(\theta,y)\biggr]. \label{eq 4.23} 
\end{align}
Let
\begin{equation*}
\pi^{*}_n:\mathbb{R}_+\times S \to P(A)  \
\end{equation*}
be defined by
	\begin{align*}
&\pi^{*}_n(\cdot|t,x)=I_{\{\hat{f}^{*}_n(t,x)\}}(\cdot),~\text{where}~\hat{f}_n^{*}:[0,1]\times S\rightarrow A,\\&\quad~\text{be a measurable mapping, defined by}~ \hat{f}_n^{*}(t,x):=f_n^{*}(\theta e^{-\alpha t},x).
\end{align*}
Hence by equation (\ref{eq 4.11}), we have a.e. $\theta\in [0,1]$ and $\forall x\in S$, we have
	\begin{align}
\left\{ \begin{array}{lllll}\alpha\theta\frac{\partial\varphi^{(n)}_\alpha}{\partial \theta}(\theta,x)
&=\displaystyle\biggl[\int_{S}q^{(n)}(dy|x,f^{*}_n(\theta,x))\varphi_\alpha^{(n)}(\theta,y)+\theta c_n(x,f^{*}_n(\theta,x))\varphi^{(n)}_\alpha(\theta,x)\biggr]\\
1\leq \varphi^{(n)}_\alpha(\theta,x)&\leq {\frac{\alpha^2 e^{{\theta L_0}/{\alpha}}}{\alpha^2-\rho_0\rho_1\theta}}(V_0(x))^{\frac{\rho_1\theta}{\alpha}}~~~ \forall~(\theta,x)\in [0,1]\times S.\label{eq 4.24}
\end{array}\right.
\end{align}
Since $c_n\geq 0$, by (\ref{eq 4.21}), we say $\varphi^{(n)}_\alpha(\theta,x)$ is increasing in $\theta$. Also we know that $\varphi^{(n)}_\alpha(\theta,x)$ is differentiable a.e. with respect to $\theta\in [0,1]$. So
\begin{align}
\frac{\partial\varphi^{(n)}_\alpha}{\partial \theta}(\theta,x)\geq 0~~\text{for a.e.}~\theta. \label{eq 4.25}
\end{align} 
So, by (\ref{eq 4.1}), (\ref{eq 4.2}) and (\ref{eq 4.24}), for all $x\in S$ and for a.e. $\theta$, we have 
	\begin{align}
\left\{ \begin{array}{lllll}&-\alpha\theta\frac{\partial\varphi^{(n)}_\alpha}{\partial \theta}(\theta,x)+\displaystyle\biggl[\int_{S}q^{(n-1)}(dy|x,f^{*}_n(\theta,x))\varphi_\alpha^{(n)}(\theta,y)+\theta c_{n-1}(x,f^{*}_n(\theta,x))\varphi^{(n)}_\alpha(\theta,x)\biggr]\leq 0\\
&\quad\quad\text{if}~x\in S_{n-1}\label{eq 4.26}
\end{array}\right.
\end{align}
and 
	\begin{align}
\left\{ \begin{array}{llll}&-\alpha\theta\frac{\partial\varphi^{(n)}_\alpha}{\partial \theta}(\theta,x)+\displaystyle\biggl[\int_{S}q^{(n-1)}(dy|x,f^{*}_n(\theta,x))\varphi_\alpha^{(n)}(\theta,y)+\theta c_{n-1}(x,f^{*}_n(\theta,x))\varphi^{(n)}_\alpha(\theta,x)\biggr]\\
&=-\alpha\theta\frac{\partial\varphi^{(n)}_\alpha}{\partial \theta}(\theta,x)\leq 0\\
&\quad\quad\text{if}~x\notin S_{n-1}~\text{(by~(\ref{eq 4.25}))} .\label{eq 4.27 }
\end{array}\right.
\end{align}
So, by Feynman-Kac formula, we get
\begin{align}
E^{\pi^{*}_n}_x\biggl[exp\biggl(\theta\int_{0}^{\infty}\int_{A}e^{-\alpha t}c_{n-1}(\xi^{(n-1)}_t,a)\pi^{*}_n(da|t,\xi^{(n-1)}_t)dt\biggr)\biggr]\leq \varphi^{(n)}_\alpha(\theta,x)~\text{for all}~(\theta,x)\in [0,1]\times S.\label{eq 4.28}
\end{align}
Also using (\ref{eq 4.11}) and Feynman-Kac formula (see (\ref{eq 3.9}) and (\ref{eq 3.14})), we have
\begin{align}
\varphi^{(n-1)}_\alpha(\theta,x)\leq E^{\pi^{*}_n}_x\biggl[exp\biggl(\theta\int_{0}^{\infty}\int_{A}e^{-\alpha t}c_{n-1}(\xi^{(n-1)}_t,a)\pi^{*}_n(da|t,\xi^{(n-1)}_t)dt\biggr)\biggr].\label{eq 4.29}
 \end{align}
By (\ref{eq 4.28}) and (\ref{eq 4.29}), we have $\varphi^{(n-1)}_\alpha(\theta,x)\leq \varphi^{(n)}_\alpha(\theta,x).$\\\\ 
Hence $\varphi^{(n)}_\alpha(\theta,x)$ is increasing in $n$ for any $(\theta,x)\in [0,1]\times S$. Now from (\ref{eq 4.22}), we know that for each $x\in S$, $\varphi^{(n)}(\cdot,x)$ is Lipschitz continous in $\theta\in [0, 1]$. Also, $\varphi^{(n)}_\alpha(\theta,x)$ is increasing as $n\rightarrow \infty$ for any $(\theta,x)\in [0,1]\times S$ and bounded above (by (\ref{eq 4.20})), therefore there exists a function $\varphi_\alpha$ on $[0,1]\times S$ that is continuous with respect to $\theta\in [0,1]$, such that along a subsequence $n_k\rightarrow \infty$, we have $\lim_{n_k\rightarrow\infty}\varphi^{(n_k)}_\alpha(\theta,x)=\varphi_\alpha(\theta,x)$ and this convergence is uniform in $\theta\in [0,1]$ for each fixed $x\in S$.
 Moreover, by (\ref{eq 4.20}), we have
\begin{align*}
1\leq\varphi_\alpha(\theta,x)
&\leq {\frac{\alpha^2 e^{{\theta L_0}/{\alpha}}}{\alpha^2-\rho_0\rho_1\theta}} (V_0(x))^{\frac{\rho_1\theta}{\alpha}}.
\end{align*}
As the proof of equation (\ref{eq 4.11}) in the step 1 (starting from the first equality of (\ref{eq 4.13})), we say that  $\varphi_{\alpha}$ is a solution to the HJB equation (\ref{eq 3.1}). Also by (\ref{eq 3.1}) and Assumption \ref{assm 2.1}, we have $$\biggl|\displaystyle{\inf_{a\in A(x)}\biggl[\int_{S}q(dy|x,a)\varphi_\alpha(\theta,y)+\theta c(x,a)\varphi_\alpha(\theta,x)\biggr]}\biggr|\leq {\frac{\alpha^2 e^{{\theta L_0}/{\alpha}}}{\alpha^2-\rho_0\rho_1\theta}}[\rho_0+2M_0+\rho_1+L_0]M_1V_1(x).$$ So, $|\alpha\theta \frac{\partial\varphi_\alpha}{\partial \theta}(\theta,x)|\leq {\frac{\alpha^2 e^{{\theta L_0}/{\alpha}}}{\alpha^2-\rho_0\rho_1\theta}} [\rho_0+2M_0+\rho_1+L_0]M_1V_1(x)$. Hence $\varphi_{\alpha}\in B^1_{V_0,V_1}([0,1]\times S)$. Finally, the uniqueness of $\varphi_\alpha(\theta,x)$ follows from the stochastic representation in Theorem \ref{theo 3.1}.
\end{proof}
	
\section{The existence of optimal control}
In this section, we present the main result of this article. Here we show the existence of an optimal control.
\begin{thm}\label{SPE}
	Suppose that Assumptions \ref{assm 2.1}, \ref{assm 2.2} and \ref{assm 3.1} are satisfied. Then, the following assertions hold.
	\begin{enumerate}
		\item The HJB equation (\ref{eq 3.1}) has a unique solution $\varphi_\alpha\in B^1_{V_0,V_1}([0,1]\times S)$ and the solution admits the following representation
		\interdisplaylinepenalty=0
\begin{align*}
			1\leq \varphi_\alpha(\theta,x)
			&=\inf_{\pi\in \Pi}E^{\pi}_x\biggl[exp\biggl(\theta\int_{0}^{\infty}\int_{A}e^{-\alpha t}c(\xi_t,a)\pi(da|\omega,t)dt\biggr)\biggr]\\
			&\leq {\frac{\alpha^2 e^{{\theta L_0}/{\alpha}}}{\alpha^2-\rho_0\rho_1\theta}} (V_0(x))^{\frac{\rho_1\theta}{\alpha}}.
			\end{align*}
		\item There exists a  measurable function $f^*: [0,1]\times S \to A$ such that  
\interdisplaylinepenalty=0
		\begin{align}
		\alpha \theta \frac{\partial\varphi_\alpha}{\partial\theta}(\theta,x)
		&=\biggl[\int_{S}q(dy|x,f^{*}(\theta,x))\varphi_\alpha(\theta,y)+\theta c(x,f^{*}(\theta,x))\varphi_\alpha(\theta,x)\biggr]\nonumber\\
		 \text{a.e.}~\theta\in [0,1].\label{eq 5.1}
		\end{align}
		\item Furthermore an optimal Markov control for the cost criterion (\ref{eq 2.5}) exists and is given by $$\tilde{\pi}^*(\cdot|t,x):=I_{\{\hat{f}(t,x)\}}(\cdot),  \hat{f}(t,x):=f^*(\theta e^{-\alpha t},x), $$		
		where $f^*$ satisfies (\ref{eq 5.1}).
		
		\end{enumerate}
		\end{thm}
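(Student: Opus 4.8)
The plan is to obtain parts (1) and (2) essentially for free from the results already proved, and to spend the real effort on part (3). For (1): Theorem~\ref{theo 4.1} provides a solution $\varphi_\alpha\in B^1_{V_0,V_1}([0,1]\times S)$ of the HJB equation (\ref{eq 3.1}) obeying the stated two-sided bound, and Theorem~\ref{theo 3.1} shows that \emph{every} solution of (\ref{eq 3.1}) in $B^1_{V_0,V_1}([0,1]\times S)$ equals $\tilde{J}^{*}_\alpha(\theta,x)=\inf_{\pi\in\Pi}E^{\pi}_x[\exp(\theta\int_0^\infty\int_A e^{-\alpha t}c(\xi_t,a)\pi(da|\omega,t)\,dt)]$; since the right-hand side is independent of the chosen solution, uniqueness follows, and the displayed representation in (1) is precisely (\ref{eq 3.2}). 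For (2): fix $(\theta,x)$; as in the opening lines of the proof of Theorem~\ref{theo 3.1}, Assumption~\ref{assm 3.1} makes $a\mapsto\theta c(x,a)\varphi_\alpha(\theta,x)+\int_S q(dy|x,a)\varphi_\alpha(\theta,y)$ continuous on the compact set $A(x)$, so the measurable selection theorem [\cite{BS}, Proposition~7.33] yields a measurable $f^{*}:[0,1]\times S\to A$ attaining the infimum in (\ref{eq 3.1}). Thus (\ref{eq 5.1}) holds for every $x$ and a.e.\ $\theta\in[0,1]$; on the remaining $\theta$-null set we simply \emph{define} the left-hand side of (\ref{eq 5.1}) by its right-hand side, exactly as was done for $f^{*}$ in Theorem~\ref{theo 3.1}.

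For part (3), set $\theta(t):=\theta e^{-\alpha t}$, $\hat{f}(t,x):=f^{*}(\theta(t),x)$, let $\tilde{\pi}^{*}(da|t,x):=I_{\{\hat{f}(t,x)\}}(da)$, and let $\{\xi_t\}$ be the corresponding Markov process. This is the control $\pi^{*}$ constructed inside the proof of Theorem~\ref{theo 3.1}, now attached to the solution $\varphi_\alpha$ of Theorem~\ref{theo 4.1} and the selector $f^{*}$ of part~(2). Since $f^{*}$ turns the inequality (\ref{eq 3.4}) into an equality, the computation leading to (\ref{eq 3.15}) --- namely applying the extension of the Feynman--Kac formula [\cite{GLZ}, Theorem~3.1] to $g(t,x,\omega)=\exp(\int_0^t\theta(s)c(\xi_s,\hat{f}(s,\xi_s))\,ds)\,\varphi_\alpha(\theta(t),x)$, whose integrability hypotheses are exactly the estimates (\ref{eq 3.6})--(\ref{eq 3.8}) --- gives, for every $t\ge0$,
\begin{align*}
\varphi_\alpha(\theta,x)=E^{\tilde{\pi}^{*}}_x\biggl[\exp\biggl(\int_0^t\theta(s)c(\xi_s,\hat{f}(s,\xi_s))\,ds\biggr)\varphi_\alpha(\theta(t),\xi_t)\biggr].
\end{align*}
Letting $t\to\infty$, the lower bound $\varphi_\alpha\ge1$ gives $\varphi_\alpha(\theta(t),\xi_t)\ge1$, so, using Fatou's lemma exactly as in (\ref{eq 3.16}),
\begin{align*}
\tilde{J}_\alpha(\theta,x,\tilde{\pi}^{*})\le\liminf_{t\to\infty}E^{\tilde{\pi}^{*}}_x\biggl[\exp\biggl(\int_0^t\theta(s)c(\xi_s,\hat{f}(s,\xi_s))\,ds\biggr)\varphi_\alpha(\theta(t),\xi_t)\biggr]=\varphi_\alpha(\theta,x)=\tilde{J}^{*}_\alpha(\theta,x).
\end{align*}
Since $\tilde{J}^{*}_\alpha(\theta,x)\le\tilde{J}_\alpha(\theta,x,\tilde{\pi}^{*})$ trivially, $\tilde{\pi}^{*}$ is optimal for $\tilde{J}_\alpha$, and because $s\mapsto\tfrac1\theta\log s$ is increasing this is the same as $\mathscr{J}_\alpha(\theta,x,\tilde{\pi}^{*})=\mathscr{J}^{*}_\alpha(\theta,x)$ for all $x\in S$, which is (3).

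I expect the only genuinely delicate point to be the justification of the Feynman--Kac identity along $\tilde{\pi}^{*}$: one must check that the $V_1$-domination and finiteness bounds of (\ref{eq 3.6})--(\ref{eq 3.8}) persist for the deterministic Markov control $\tilde{\pi}^{*}$, which they do since those estimates hold uniformly over $\pi\in\Pi$. The ``a.e.\ $\theta$'' qualification in (\ref{eq 3.1}) is harmless, because the time change $t\mapsto\theta e^{-\alpha t}$ sends a $\theta$-null set to a $t$-null set and therefore leaves the integral identities above unchanged; likewise the H\"older/limit manipulations ($p\downarrow1$) needed elsewhere in Theorem~\ref{theo 3.1} are not needed here, since the equality version (\ref{eq 3.15}) of the Feynman--Kac relation holds outright for $\tilde{\pi}^{*}$.
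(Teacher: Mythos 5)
Your proposal is correct and follows essentially the same route as the paper: part (1) from Theorems \ref{theo 3.1} and \ref{theo 4.1}, part (2) by measurable selection via [\cite{BS}, Proposition 7.33], and part (3) by re-running the Feynman--Kac equality (\ref{eq 3.15}) and the Fatou argument (\ref{eq 3.16}) from the proof of Theorem \ref{theo 3.1} for the selector-induced Markov control, then transferring optimality to $\mathscr{J}_\alpha$ by monotonicity of the logarithm. The only difference is presentational: the paper simply cites the proof of Theorem \ref{theo 3.1} at this point, whereas you write the argument out explicitly.
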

	\begin{proof}
		Part (1) follows from Theorems \ref{theo 3.1} and \ref{theo 4.1}.	To prove (2), for each given $(\theta,x)\in[0,1]\times S$, by \cite{HL2}, we have the continuity of the function $$ \eta(x,\theta,a):=\int_{S}q(dy|x,a)\varphi_\alpha(\theta,y)+\theta c(x,a)\varphi_\alpha(\theta,x)$$ in $a\in A(x)$.
		 Thus, the measurable selection theorem [\cite{BS}, Proposition 7.33] ensured the existence of a measurable function $f^{*}$ satisfying (\ref{eq 5.1}), and so (2) follows.  Moreover for any $f^{*}$ satisfying (\ref{eq 5.1}), from the proof of Theorem \ref{theo 3.1}, we have $\inf_{\pi\in\Pi} \tilde{J}_\alpha(x,\theta,{\pi})=\tilde{J}_\alpha(x,\theta,\tilde{\pi}^{*})= \varphi_\alpha(\theta,x)$, which together with (\ref{eq 2.5}), (\ref{eq 2.6}) and part (1), we have $\inf_{\pi\in\Pi}\mathscr{J}_\alpha(x,\theta,{\pi})=\mathscr{J}_\alpha(x,\theta,\tilde{\pi}^{*})=\frac{1}{\theta}\ln\tilde{J}_\alpha(x,\theta,\tilde{\pi}^{*})= \frac{1}{\theta}\ln\varphi_\alpha(\theta,x).$
		Hence $\tilde{\pi}^{*}$ is an optimal Markov control.   
  		\end{proof}
  		 \section{Application and example}
  	In this section, we verify the above assumptions with one example, where the transition and cost rates are unbounded. \\
  	
  	\begin{example}
  		\textbf{The Gaussian Model:}
  		Suppose a hunter is hunting outside his house for his
manager. Suppose the house is at state 0. A positive state represents the distance from the house to the right, and a negative state represents the distance from the house to the left. Let $S=\mathbb{R}$. If the current postion is $x\in S$, the hunter takes a action $a\in A(x)$, then after an exponentially distributed travel time with rate $\lambda(x,a)>0$, the hunter reaches the new position, and the travel distance follows the normal distribution with mean $x$ and variance $\sigma$. (Or we can interpret $\lambda(x,a)$ as the total jump intensity that is an arbitrary measurable positive-valued function on $S\times A$, and the distribution of the state after a jump from $x\in S$ is normal with the variance $\sigma$ and expectation $x$.) Also assume that the hunter receives a payoff $c(x,a)$ from his manager for each unit of time he spends there. 
  		Let us consider the model as $A_2:=\{S,(A,A(x),x\in S),c(x,a),q(dy|x,a)\}$, where $S=(-\infty,\infty)$. For each $D\in \mathscr{B}(S)$, the transition rate is
  		\begin{align}
  		q(D|x,a)=\lambda(x,a)\bigg[\int_{y\in D}\frac{1}{\sqrt{2\pi}\sigma}e^{-\frac{(y-x)^2}{2\sigma^2}}dy-\delta_x(D)\bigg],~x\in S,a\in A(x).
  		\end{align}
  		To ensure the existence of an optimal Markov control for the model, we consider the following hypotheses.
  		\begin{enumerate}
  			\item [(I)] For each fixed $x\in S$, $\lambda(x,a)$ is continuous in $a\in A(x)$ and there exists a positive constant $M$ such that $0<\sup_{a\in A(x)}\lambda(x,a)\leq M(x^2+1)$ and $M<\frac{\alpha}{3780(\sigma^8+\sigma^6+\sigma^4+\sigma^2)}$.
  			\item [(II)] For each $x\in S$, the cost rate $c(x,a)$ is nonnegative and continuous in $a\in A(x)$ and there exists constant $0<\rho_1<\min\{\alpha,\frac{\alpha^2}{M\sigma^2}\}$ such that
  			$$\sup_{a\in A(x)}c(x,a)\leq \rho_1 \log(1+x^2).$$
  			\item [(III)] For each fixed $x\in S$, $A(x)$ is a compact subset of the Borel spaces $A$.
  		\end{enumerate} 
  	\end{example}
  	\begin{proposition}	\label{Prop 5.1}
  		Under conditions (I)-(III), the above controlled system satisfies the Assumptions \ref{assm 2.1}, \ref{assm 2.2}, and \ref{assm 3.1}. Hence by Theorem \ref{SPE}, there exists an optimal Markov control for this model.
  	\end{proposition}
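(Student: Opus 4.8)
\emph{Proof plan.} The plan is to exhibit explicit Lyapunov functions witnessing Assumptions \ref{assm 2.1}, \ref{assm 2.2} and \ref{assm 3.1}, after which the conclusion follows verbatim from Theorem \ref{SPE}. Put $V_0(x):=1+x^2$ and $V_1(x):=(1+x^2)^2$, so that $V_0^2=V_1$ and Assumption \ref{assm 2.2}(ii) holds at once with $M_1=1$. All the estimates rest on the elementary identity $\int_{S}p(y)\,q(dy|x,a)=\lambda(x,a)\bigl(\mathbb{E}[p(Y)]-p(x)\bigr)$, valid for every polynomial $p$, where $Y=x+\sigma Z$ with $Z$ a standard normal variable; this holds because $q(dy|x,a)=\lambda(x,a)[N(x,\sigma^2)(dy)-\delta_x(dy)]$, $N(x,\sigma^2)$ being the normal law in the definition of $q$. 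Since that law has no atoms, $q_x(a)=-q(\{x\}|x,a)=\lambda(x,a)$, so $q^*(x)=\sup_{a}\lambda(x,a)\le M(1+x^2)<\infty$ and the conservativeness/stability conditions of the model hold; moreover Assumption \ref{assm 2.1}(ii) holds with $M_0=M$ by (I).

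Taking $p(y)=1+y^2$ gives $\int_{S}V_0\,q(dy|x,a)=\lambda(x,a)\sigma^2\le M\sigma^2(1+x^2)$, so Assumption \ref{assm 2.1}(i) holds with $\rho_0=M\sigma^2$; hypothesis (II) gives Assumption \ref{assm 2.1}(iii) with $L_0=0$, and the requirement $0<\rho_1<\min\{\alpha,\rho_0^{-1}\alpha^2\}=\min\{\alpha,\alpha^2/(M\sigma^2)\}$ is exactly the restriction on $\rho_1$ imposed in (II). The substantive step is Assumption \ref{assm 2.2}(i): taking $p(y)=(1+y^2)^4=V_1^2(y)$ and writing $1+Y^2=(1+x^2)+(2\sigma xZ+\sigma^2Z^2)$, a direct expansion of the fourth power using $\mathbb{E}Z^2=1$, $\mathbb{E}Z^4=3$, $\mathbb{E}Z^6=15$, $\mathbb{E}Z^8=105$ (odd moments vanishing) shows that $\mathbb{E}[(1+Y^2)^4]-(1+x^2)^4$ is a polynomial in $x$ of degree $6$ with nonnegative coefficients, hence dominated by $C(\sigma)(1+x^2)^3$, where a crude count of coefficients gives $C(\sigma)\le 3780(\sigma^8+\sigma^6+\sigma^4+\sigma^2)$. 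Therefore $\int_{S}V_1^2\,q(dy|x,a)\le\lambda(x,a)\,C(\sigma)(1+x^2)^3\le MC(\sigma)(1+x^2)^4$, so Assumption \ref{assm 2.2}(i) holds with $b_1=0$ and $\rho_2:=MC(\sigma)$, and the bound on $M$ in (I) forces $\rho_2<\alpha$. Carrying out this moment computation and keeping the constant explicit is the only genuinely technical part; everything else is bookkeeping.

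Finally, Assumption \ref{assm 3.1}(i) is precisely (III); in Assumption \ref{assm 3.1}(ii) the continuity of $c(x,\cdot)$ is part of (II), while $a\mapsto q(D|x,a)=\lambda(x,a)\bigl(N(x,\sigma^2)(D)-\delta_x(D)\bigr)$ is continuous because $\lambda(x,\cdot)$ is continuous by (I); and Assumption \ref{assm 3.1}(iii) holds since $a\mapsto\int_{S}V_0\,q(dy|x,a)=\lambda(x,a)\sigma^2$ is continuous. With Assumptions \ref{assm 2.1}, \ref{assm 2.2} and \ref{assm 3.1} verified, Theorem \ref{SPE} applies: the HJB equation (\ref{eq 3.1}) has a unique solution $\varphi_\alpha\in B^1_{V_0,V_1}([0,1]\times S)$ admitting the stated stochastic representation, and the Markov control $\tilde\pi^*$ built from a minimizing selector $f^*$ of (\ref{eq 5.1}) is optimal for the Gaussian model.
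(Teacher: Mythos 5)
Your proposal is correct and follows essentially the same route as the paper: verify Assumptions \ref{assm 2.1}, \ref{assm 2.2}, \ref{assm 3.1} with explicit Lyapunov functions via Gaussian moment computations and then invoke Theorem \ref{SPE}. The only cosmetic difference is your choice $V_1(x)=(1+x^2)^2$ (giving $M_1=1$, $b_1=0$) versus the paper's $V_1(x)=x^4+1$ (with $M_1=2$, $b_1=1$); your sketched moment bound indeed yields $C(\sigma)\le 420(\sigma^2+\sigma^4+\sigma^6+\sigma^8)\le 3780(\sigma^8+\sigma^6+\sigma^4+\sigma^2)$, so $\rho_2<\alpha$ as claimed.
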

  	\begin{proof}
  		We know $\frac{1}{\sqrt{2\pi}\sigma}\int_{-\infty}^{\infty}(y-x)^{2k+1}e^{-\frac{(y-x)^2}{2\sigma^2}}dy=0$ and $\frac{1}{\sqrt{2\pi}\sigma}\int_{-\infty}^{\infty}(y-x)^{2k}e^{-\frac{(y-x)^2}{2\sigma^2}}dy=1\cdot 3\cdots(2k-1)\sigma^{2k}$ for all $k=0,1\cdots.$\\
  		We first verify Assumption \ref{assm 2.1}.
  		Let $V_0(x)=x^2+1$.
  		\begin{align}\label{eq 5.3}
  		\int_{S}V_0(y)q(dy|x,a)&=\lambda(x,a)\bigg[\frac{1}{\sqrt{2\pi}\sigma}\int_{-\infty}^{\infty}(y^2+1)e^{-\frac{(y-x)^2}{2\sigma^2}}dy-(x^2+1)\bigg]\nonumber\\
  		&=\lambda(x,a)\sigma^2\nonumber\\&\leq M \sigma^2V_0(x).
  		\end{align}
  		Let $\rho_0=M \sigma^2$. Then $	\int_{S}V_0(y)q(dy|x,a)\leq \rho_0V_0(x).$ Now
  		\begin{align*}
  		q^{*}(x)=\sup_{a\in A(x)}q_x(a)=\sup_{a\in A(x)}\lambda(x,a)\leq M(x^2+1)=M V_0(x)~\forall~x\in S. 
  		\end{align*}
  		 Now by condition (II), we can write
  		\begin{align*}
  		\sup_{a\in A(x)}c(x,a)\leq \rho_1 \log(1+x^2)+M.
  		\end{align*}
  		Also by condition (II), $0<\rho_1<\min\{\alpha,\rho_0^{-1}\alpha^2\}$.
  		Hence Assumption \ref{assm 2.1} is verified.\\
  		Now we verify Assumption \ref{assm 2.2}.\\
  		Let $V_1(x)=x^4+1$. Then
  		for any $x\in S$, $a\in A(x)$,
  		\begin{align*}
  		\int_{S}q(dy|x,a)V^2_1(y)
  		&=\lambda(x,a)\biggl[\frac{1}{\sqrt{2\pi}\sigma}\int_{-\infty}^{\infty}(y^4+1)^2e^{-\frac{(y-x)^2}{2\sigma^2}}dy-(x^4+1)^2\biggr]\\
  		&=\lambda(x,a)(105\sigma^8+420x^2\sigma^6+210 x^4\sigma^4+6\sigma^4+12\sigma^2x^2+28x^6\sigma^2)\\
  		&\leq 420\lambda(x,a)(\sigma^8+\sigma^6+\sigma^4+\sigma^2)(x^6+x^4+x^2+1)\\
  		&\leq 420\lambda(x,a)(\sigma^8+\sigma^6+\sigma^4+\sigma^2)(3x^6+3)\\
  		&\leq 1260M(\sigma^8+\sigma^6+\sigma^4+\sigma^2)(x^6+1)(1+x^2)\\
  		&\leq  3780M(\sigma^8+\sigma^6+\sigma^4+\sigma^2)(x^4+1)^2\\
  		&\leq  3780M(\sigma^8+\sigma^6+\sigma^4+\sigma^2)V_1^2(x)+1.
  		\end{align*}
  		Now take $\rho_2=3780M(\sigma^8+\sigma^6+\sigma^4+\sigma^2)$. Then by condition (I), we have $0<\rho_2<\alpha$.
  		Also, $(1+x^2)^2\leq 2(1+x^4)$ for all $x\in S$. Let $M_1=2$, then $V_0^2(x)\leq M_1V_1(x)$. 
  		Hence, Assumption \ref{assm 2.2} is verified.
  		Now by conditions (I) and (II), $q(\cdot|x,a)$ and $c(x,a)$ are continuous in $a\in A(x)$. Now by (\ref{eq 5.3}), $	\int_{S}V_0(y)q(dy|x,a)$ is continuous in $a\in A(x)$. Hence Assumption \ref{assm 3.1} is also verified. So, by Theorem \ref{SPE}, we say that there exists an optimal Markov control for this model.
  	\end{proof}
   \bibliographystyle{elsarticle-num}

\begin{thebibliography}{00}  
	\bibitem{BR} N. BAUERLE AND U. RIEDER, \textit{More risk-sensitive Markov decision processes}, Math. Oper. Res., 39 (2014), pp. 105-120.

	
\bibitem{BS} D. BERTSEKAS AND S. SHREVE, \textit{Stochastic Optimal Control: The   Discrete-Time Case},  Academic Press, Inc., 1996.


\bibitem{GS} M. K. GHOSH AND S. SAHA, \textit{Risk-sensitive control of continuous-time Markov chains}, Stoch., 86 (2014), pp. 655-675.





\bibitem{GH2} X. P. GUO AND O. HERNANDEZ-LERMA, \textit{Continuous-Time Markov decision processes: Theory and Applications}, Stochastic Modelling and Applied Probability, Springer, Berlin, 2009.

\bibitem{GHH}	X. GUO, X. HUANG, AND Y. HUANG, \textit{Finite-horizon optimality for continuous-time Markov decision processes with unbounded transition rates}, Adv.  Appl. Probab., 47 (2015), pp.
 1064-1087.
 
 \bibitem{GHS} X. GUO, Y. HUANG, AND X. SONG, \textit{Linear programming and constrained average optimality for general continuous-time Markov decision processes in history-dependent polices}, Siam J. Control Optim, 50 (2012), pp. 23-47.
 
 \bibitem{GL}	X. GUO AND Z. W. LIAO, \textit{Risk-sensitive discounted continuous-time Markov decision processes with unbounded rates}, SIAM J. Control Optim., 57 (2019), pp. 3857-3883.
 
 \bibitem{GLZ} X. GUO, Q. LIU, AND Y. ZHANG, \textit{Finite horizon risk-sensitive continuous-time Markov decision processes with unbounded transition and cost rates}, 4OR, 17 (2019), pp. 427-442.

\bibitem{GP} X. GUO AND A. PIUNOVSKIY, \textit{Discounted continuous-time Markov decision processes with constraints: Unbounded transition and loss rates}, Math. Oper. Res., 36 (2011), pp. 105-132.














\bibitem{GZ} X. GUO AND J. ZHANG, \textit{Risk-sensitive continuous-time Markov decision processes with unbounded rates and Borel spaces}, Discrete Event Dyn. Syst.,  29 (2019), pp. 445-471.

\bibitem{GZ2} X. GUO AND Y. ZHANG, \textit{On risk-sensitive piecewise deterministic Markov decision processes}, Appl. Math. Optim. 81 (2020), pp. 685-710. 

\bibitem{HL2}  O. HERNANDEZ-LERMA AND J. LASSERRE, \textit{ Further topics on discrete-time Markov control processes}, Springer, New York, 1999.


\bibitem{K} M. Y. KITAEV, \textit{Semi-Markov and jump Markov controlled models: Average cost criterion}, SIAM Theory Probab. Appl., 30 (1995), pp. 272-288.

\bibitem{KR} M. Y. KITAEV AND V.V. RYKOV, \textit{Controlled Queueing Systems}, CRC Press, Boca Raton, 1995.

\bibitem{KP1} K.S. KUMAR AND C. PAL, \textit{Risk-sensitive control of jump process on denumerable state space with near monotone cost}, Appl. Math. Optim., 68 (2013), pp. 311-331.

\bibitem{KP2} K.S. KUMAR AND C. PAL, \textit{Risk-sensitive control of continuous-time Markov processes with denumerable state space}, Stoch. Anal. Appl., 33 (2015), pp. 863-881.
\bibitem{MS1} G. B. MASI AND L. STETTNER, \textit{Infinite horizon risk-sensitive control of discrete time Markov processes with small risk}, Syst. Control Lett., 40 (2000), pp. 15-20.

\bibitem{MS2} G. B. MASI AND L. STETTNER, \textit{Infinite horizon risk-sensitive control of discrete time Markov processes under minorization property}, Siam J. Control Optim., 46 (2007), pp. 231-252.
	

\bibitem{PP}  C. PAL AND S. PRADHAN, \textit{Risk-sensitive control of pure jump processes on a general state space}, An International Journal of Probab. and Stoch. Processes, 91 (2)(2019), pp. 155-174.

\bibitem{PZ} A. PIUNOVSKIY AND Y. ZHANG, \textit{Discounted continuous-time Markov decision processes with unbounded rates: The convex analytic approach}, SIAM J. Control Optim., 49 (2011), pp. 2032-2061.

\bibitem{PZ1}  A. PIUNOVSKIY AND Y. ZHANG, \textit{Continuous-Time Markov Decision Processes}, Springer, 2020.
\bibitem{PRH} T. PRIETO-RUMEAU AND O. HERNANDEZ-LERMA, \textit{Selected topics in continuous-time controlled Markov chains and Markov games}, Imperical College Press, London, 2012.


\bibitem{W} Q. WEI, \textit{Continuous-time Markov decision processes with risk-sensitive finite-horizon cost criterion}, Math. Methods Oper. Res., 84 (2016), pp. 461-487.

%
%
 		
\bibitem{WH} P. WHITTLE, \textit{Risk-Sensitive Optimal Control}, Wiley-Inter science Series in Systems and Optimization, John Wiley \& Sons Ltd., Chichester, 1990.


%

\bibitem{Z1} Y. ZHANG, \textit{Continuous-time Markov decision processes with exponential utility}, SIAM J. Control Optim., 55 (2017), pp. 2636-2660.


 		








	

\end{thebibliography}

\end{document}